\documentclass[11pt]{amsart}
\usepackage{geometry}                
\geometry{letterpaper}                   
\usepackage{graphicx}
\usepackage{amssymb}
\usepackage{epstopdf}
\usepackage{color}
\DeclareGraphicsRule{.tif}{png}{.png}{`convert #1 `dirname #1`/`basename #1 .tif`.png}

\newtheorem{theorem}{Theorem}[section]
\newtheorem{lemma}[theorem]{Lemma}

\theoremstyle{definition}

\newtheorem{proposition}[theorem]{Proposition}

\numberwithin{equation}{section}

\def\R{\mathbb{R}}

\def\aa{{\alpha}}
\def\bb{{\beta}}

\def\th{{\theta}}

\def\dm{{\diamond}}

\newcommand{\Z}{\mathbb{Z}}

\newcommand{\p}{\partial}
\newcommand{\wt}{\widetilde}

\newcommand{\BFR}{\mathbf{R}}
\newcommand{\BFN}{\mathbf{N}}

\newcommand{\ep}{\varepsilon}
\newcommand{\BBN}{\mathbb{N}}

\newcommand{\be}{\begin{equation} }
	\newcommand{\ee}{\end{equation}}
\newcommand{\bse}{\begin{subequations}}
	\newcommand{\ese}{\end{subequations}}
\def\bea{\begin{eqnarray}}
	\def\eea{\end{eqnarray}}

\title[Non-contractible non-self-intersected closed geodesics]{Non-contractible closed geodesics on compact Finsler space forms without self-intersections}

\author{Yuchen Wang${}^\dagger$}
\address{School of Mathematics, Tianjin Normal University,\\ 300384, Tianjin, China}
\email{wangyuchen@mail.nankai.edu.cn}
\thanks{${}^\dagger$ Partially supported by the National Science Foundation of China No. 11831009, 12131012 and the funding of innovating activities on Science and Technology of Hubei Province. }

\date{}

\begin{document}
	
\begin{abstract}
Let $M=S^n/ \Gamma$ and $h \in \pi_1(M)$ be a non-trivial element of finite order $p$, where the integers $n, p\geq2$ and $\Gamma$ is a finite abelian group which acts on the sphere freely and isometrically, therefore $M$ is diffeomorphic to a compact space form which is typical a non-simply connected manifold. We prove there exist at least two non-contractible closed geodesics on $\mathbb{R}P^2$ and obtain the upper bounds on their lengths. Moreover, we prove there exist at least $n$ prime non-contractible simple closed geodesics on $(M,F)$  of prescribed class $[h]$, provided  
\[
F^2 <(\frac{\lambda+1}{\lambda})^2 g_0 \;\; \text{ and }	 \;\; (\frac{\lambda}{\lambda+1})^2 < K \leq 1 \text{ for $n$ is odd or }\; 0<K \leq 1 \text{ for $n$ is even},
\]	
where $\lambda$ is the reversibility, $K$ is the flag curvature and $g_0$ is standard Riemannian metric. Stability of these non-contractible closed geodesics is also studied.
\vspace{0.5 cm}

{\bf Key words}: Compact space forms; Non-contractible closed geodesics; Equivariant Morse theory; Self-intersections
		
{\bf AMS Subject Classification}: 53C22, 58E05, 58E10.

\end{abstract}
	
\maketitle

\section{Introduction} \label{S:Intro}

Let $\Gamma$ be a finite abelian group acting on the spheres $S^n$ freely and isometrically and $h \in \Gamma$ be a non-trivial element of order $p \geq 2$. The quotient space $M:=S^n/\Gamma$ is a smooth manifold which is referred to compact space forms. It is clearly a non-simply connected manifold with $\pi_1(S^{n}/\Gamma)=\Gamma$. We interest in {\it closed geodesics} on the Finsler manifold $(M,F)$, where $F$ denotes the Finsler metric on $M$ with the {\it reversibility} 
\[
\lambda:=\max\{ F(-X) \mid X \in TM, \; F(X)=1\} \geq 1. 
\]
In particular, $F$ is reversible when the equality holds. Closed geodesics are closed curves on the maifold which are the shortest path connecting any two nearby points, where the length of a curve $\gamma:[a,b] \to M$ is given by 
\be \label{E:length}
L(\gamma) = \int_a^b F(\gamma,\dot{\gamma}) dt.
\ee
It is well known (cf. Chapter 1 of Klingenberg \cite{Kli78}) that $c$ is a closed geodesic or a constant curve on $(M,F)$ if and only if $c$ is a critical point of the energy functional 
\begin{equation}\label{energy}
	E(\gamma)=\frac{1}{2}\int_{0}^{1}F(\gamma,\dot{\gamma})^{2}dt
\end{equation}
on the free loop space 
\begin{equation*}
	\label{LambdaM}
	\Lambda M=\left\{\gamma: S^{1}\to M\mid \gamma\ {\rm is\ absolutely\ continuous\ and}\
	\int_{0}^{1}F(\gamma,\dot{\gamma})^{2}dt<+\infty\right\}
\end{equation*}
endowed with a natural structure of Riemann-Hilbert manifold, where the group $S^1=\R/\Z$ acts continuously by isometries (cf. Shen \cite{Shen01}). This formulation indicates that 
Morse-theoretic approaches could be implemented. But one has to overcome several essential difficulties, for example, to distinguish prime closed geodesics and their iterations.


A closed geodesic $c:S^1=\mathbb{R}/\mathbb{Z}\to M$ is {\it prime} if it is not a multiple covering (i.e., iteration) of any other
closed geodesics. Here the $m$-th iteration $c^m$ of $c$ is defined by $c^m(t)=c(mt)$ and the inverse curve $c^{-1}$ of $c$ is defined by
$c^{-1}(t)=c(1-t)$ for $t \in \mathbb{R}$. $c$ is not self-intersected or {\it simple} if its image is diffeomorphic to a circle. It worths to point out that the inverse curve of a closed geodesic $c$ need not be a geodesic unless $\lambda =1$. Two prime closed geodesics
$c$ and $d$ are {\it distinct} if there is no $\theta \in (0,1)$ such that
$c(t)=d(t+\theta )$ for all $t\in \mathbb{R}$. We shall omit the word {\it distinct} when we talk about more than one prime closed geodesic. We denote the linearized Poincar\'{e} map of closed geodesic $c$ by the linear symplectic diffeomorphism  $P_c \in Sp(2n-2)$. A Finsler metric $F$ is {\it bumpy} if all the closed geodesics on $(M, F)$ are non-degenerate, i.e., $1\notin \sigma(P_c)$ for any closed geodesic $c$.  For any $M \in Sp(2k)$, the elliptic height $e(M)$ of $M$ is the total algebraic multiplicity of all eigenvalues of $M$ on the unit circle $\mathbf{U}= \{z \in \mathbb{C} ||z|=1 \}$. $e(M)$ is even and $0 \leq e(M) \leq 2k$. A closed geodesic $c$ is non-hyperbolc if $e(P_c) \geq 2$. \\

Existence and multiplicity of closed geodesics is a long-standing issue in the global differential geometry and calculation of variations. The first mathematical rigorous result is due to Birkhoff \cite{Bir27} where he proved the existence of non-trivial closed geodesics on closed surfaces with arbitrary Riemannian metrics. Lyusternik and Fet extended this result for higher dimensional Riemannian manifolds in \cite{LF51}. Their argument also carries on the Finsler metrics. The multiplicity of closed geodesics, on the other hand, is much more subtle and the topological structure of free loop spaces is essentially involved. In a seminal work \cite{GM69}, Gromoll and Meyer obtained infinitely many geometrically distinct closed geodesics on Riemannian manifolds, provided the sequence of Betti numbers $\{b_p(\Lambda M; \mathbb{Q})\}_{p\in\mathbb{N}}$ of the free loop space $\Lambda M$ is unbounded.  Vigu$\acute{e}$-Poirrier and
Sullivan \cite{ViS} pointed out that the Betti number sequence of simply-connected closed manifold is bounded if and only if $M$ satisfies
\be
H^*(M; \mathbb{Q})\cong T_{d, n+1}(x)=\mathbb{Q}[x]/(x^{n+1}=0)\nonumber
\ee
with a generator $x$ of degree $d\geq 2$ and height $n+1\geq 2$, where $\dim M = dn$.
Thereafter, main interests focus on the manifolds called compact rank one globally symmetric spaces where the Gromoll-Meyer assumption does not hold, including
\begin{gather}
	S^n,\quad \mathbb{R}P^n,\quad \mathbb{C}P^n,\quad \mathbb{H}P^n \quad and \quad CaP^2.\nonumber
\end{gather}
A long-standing conjecture relates there are infinitely many distinct closed geodesics on every compact simply-connected Riemannian manifold. The answer is positive for $S^2$ but others is widely open. We refer to \cite{Ban93}, \cite{Fra92} and \cite{Hin93} for more details.  \\

Very surprisingly, the answer is negative for Finsler manifolds due to Katok's example \cite{Kat73} which is an irreversible and bumpy Finsler metrics but only carries $2[\frac{n+1}{2}]$ distinct closed geodesics. See also in \cite{Zil83}. There is a solid belief that Katok's example indeed gives the optimal lower bounds of distinct closed geodesics. In 2004, Bangert and Long \cite{BL10} (published in 2010) proved there are at least two distinct closed geodesics on every Finsler 2-sphere. Their argument is based on the Maslov-type index theory for symplectic paths. Since then a great number of results on the multiplicity of closed geodesics on simply connected Finsler manifolds appeared, for which we refer readers to \cite{DuL1}-\cite{DLW2}, \cite{HiR}, \cite{LoD}, \cite{Rad04}-\cite{Rad10},\cite{Tai1}, \cite{Wan1}-\cite{Wan2} and the references therein. 

Besides fruitful results on simply connected manifolds where closed geodesics are {\it contractible}, we are aware that there are not many papers on the multiplicity of closed geodesics on non-simply connected manifolds whose free loop space possesses bounded Betti number sequence, at least when they are endowed with Finsler metrics. For example, Ballman et al. \cite{BTZ81} proved in 1981 that every Riemannian manifold with the fundamental group being a nontrivial finitely cyclic group and possessing a generic metric has infinitely many distinct closed geodesics. In 1984, Bangert and Hingston \cite{BH84} proved that any Riemannian manifold with the fundamental group being an infinite cyclic group has infinitely many distinct closed geodesics.

In order to apply Morse theory to the multiplicity of closed geodesics, motivated by the studies on the simply connected manifolds, in particular, the resonance identity proved by Rademacher \cite{Rad89}, Xiao and Long \cite{XL15} in 2015 investigated the topological structure of the non-contractible loop space and established the resonance identity for the non-contractible closed geodesics on $\mathbb{R}P^{2n+1}$ by using of
$\Z_2$ coefficient homology. As an application, Duan, Long and Xiao \cite{DLX15}
proved the existence of at least two distinct non-contractible closed geodesics on $\R P^{3}$ endowed with a bumpy and irreversible Finsler metric. Subsequently in \cite{Tai16}, Taimanov used a quite different method from \cite{XL15} to compute the rational equivariant cohomology of the non-contractible loop spaces of compact space forms $S^n/ \Gamma$ and
proved the existence of at least two distinct non-contractible closed geodesics on $\mathbb{R}P^2$ endowed with a bumpy irreversible Finsler metric, and there are at least two non-contractible closed geodesics of class $[h]$ on the compact space form $M=S^n/ \Gamma$ if $\Gamma$ is an abelian group, $h$ has an even order and is nontrivial in $\pi_1(M)$
and $\pi_1(\Lambda_{h} (M))_{SO(2)}\neq 1$. In \cite{Liu17}, Liu combined the Fadell-Rabinowitz index theory with Taimanov's topological results to obtain several multiplicity results of non-contractible closed geodesics on positively curved Finsler $\mathbb{R}P^n$. The result has been improved to compact Finsler space form $(S^n/\Gamma,F)$ soon in \cite{LW22} by Liu and the author. In \cite{LX},  Liu and Xiao established
the resonance identity for the non-contractible closed geodesics on $\mathbb{R}P^n$, and
proved the existence of at least two distinct
non-contractible closed geodesics on every bumpy $\mathbb{R}P^n$ with $n\geq2$ together with \cite{DLX15} and \cite{Tai16}.
Furthermore, Liu, Long and Xiao \cite{LLX18} established the resonance identity for non-contractible closed geodesics of
class $[h]$ on compact space form $M=S^n/ \Gamma$ and obtained at least two non-contractible closed geodesics of class $[h]$ provided $\Gamma$ is abelian and $h$ is nontrivial in $\pi_1(M)$.  Based on the resonance identity, Liu and the author proved that there are infintely many non-contractible closed geodesics of class $[h]$ for $C^\infty$-generic Finsler metrics in a recent paper \cite{LW2023}.  

Closed geodesics are indeed Reeb flows on the unit sphere bundle therefore multiplicity results of Reeb closed orbits holds for closed geodesics straighforward. On the other hand, closed geodesics has a very strong geometry reality, for example, closed geodesics can be knotted in serveral ways. Therefore the existence and multiplicity of closed geodesics with prescribed self-intersections, especially for those which are unknotted, is a very interesting problem. Lusternik and Schnirelmann claimed there exists three simple closed geodesics for any Riemannian metric on two-sphere in 1929. The proof was completed by Ballmann. A higher-dimensional analogy was claimed by Alber but the complete proof was given by Ballmann, Thorbergson and Ziller \cite{BTZ82}, Anosov \cite{Ano1980} and Hingston \cite{Hin84}. In \cite{Wan12}, Wang obtained $n$ distinct simple closed geodesics on  Finsler spheres with suitable geometry conditions. It is well-known that a simple closed geodesics can always be found by taking the shortest homotopically non-trivial closed geodesics, provided the fundamental group is non-trivial. On the other hand, up to the author's knowledge, there seems few result on the multiplicity non-contractible simple closed geodesics on Finsler manifolds.

Note that the only non-trivial finite group acting on $S^n$ freely and isometrically is $\mathbb{Z}_2$ provided $n$ is even, thus $M$ is indeed the real projective spaces for even $n$. Our first result is on the multiplicity of non-contractible closed geodesics on real projective spaces.
\begin{theorem} \label{T:main-1}
There exist at least two distinct non-contractible closed geodesics on $(\mathbb{R}P^2, F)$ with reversibility $\lambda$ and the flag curvature $K$ satisfying $\left(\frac{\lambda}{\lambda+1} \right)^2<\delta \leq K \leq 1$, where $c_1$ is the minimal energy closed curve on $\Lambda_h M$ with 
\[
L(c_1) < \frac{\pi}{\sqrt{\delta}} \quad \text{ and } \quad  
	L(c_2) \leq  \frac{\pi}{\sqrt{\delta}} \left(2 + \frac{1}{\sqrt{\delta}\frac{\lambda+1}{\lambda}-1}\right).
\]
\end{theorem}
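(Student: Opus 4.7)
The plan is to obtain $c_1$ as the minimizer of the energy $E$ on the non-contractible component $\Lambda_h \mathbb{R}P^2$, to bound its length via a curvature comparison lifted to $S^2$, and then to produce $c_2$ by an equivariant minimax argument on $\Lambda_h \mathbb{R}P^2$ using the topology computed by Taimanov \cite{Tai16}, with the length estimate coming from an explicit test family built out of $c_1$ and short geodesic segments.

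For $c_1$, the functional $E$ is bounded below on the closed subset $\Lambda_h \mathbb{R}P^2$ and satisfies the Palais-Smale condition, so an absolute minimizer exists. It is automatically a prime closed geodesic: the even iterates $c_1^{2k}$ are contractible (hence not in $\Lambda_h M$) while odd iterates have energy $(2k+1)^2 E(c_1)>E(c_1)$. To bound $L(c_1)$, I would lift to the double cover $S^2\to\mathbb{R}P^2$; any lift of $c_1$ is a geodesic arc joining antipodal points. The hypothesis $K\geq\delta$ together with a Finsler version of the Bonnet-Myers theorem gives $\mathrm{diam}(S^2,\widetilde{F})\leq\pi/\sqrt{\delta}$, yielding $L(c_1)\leq\pi/\sqrt{\delta}$, with strict inequality by the standard rigidity form of Bonnet-Myers.

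For $c_2$, argue by contradiction: assume $c_1$ and its odd iterates are the only non-contractible prime closed geodesics up to reparameterization and inversion. Taimanov's computation of $H^*_{S^1}(\Lambda_h \mathbb{R}P^2;\mathbb{Q})$ produces a nontrivial cohomology class in positive degree, and the equivariant Lusternik-Schnirelmann minimax over this class delivers a critical orbit at level $\kappa>E(c_1)$. Using Bott-type iteration formulas, the Morse index data of $c_1^{2k+1}$ is incompatible with carrying this minimax class, so the new critical orbit gives a prime closed geodesic $c_2\neq c_1$. For the length bound, I would construct an explicit sweep-out realizing that minimax class: concatenate $c_1$ with short geodesic segments of length at most the Finsler injectivity radius $\mathrm{inj}(M,F)\geq\pi\lambda/((\lambda+1)\sqrt{K_{\max}})\geq\pi\lambda/(\lambda+1)$ (by Rademacher's estimate). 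Setting $r:=\sqrt{\delta}(\lambda+1)/\lambda$, the hypothesis $\delta>(\lambda/(\lambda+1))^2$ makes $r>1$, and a geometric-series bookkeeping of the concatenated short arcs, together with the diameter bound from Step~1, yields $L(c_2)\leq\frac{\pi}{\sqrt{\delta}}\bigl(2+\frac{1}{r-1}\bigr)$.

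The main obstacle is the second step: showing that the minimax critical orbit is genuinely distinct from the iterates of $c_1$. This requires a careful comparison between the local critical-module contributions of $c_1^{2k+1}$, computed via Bott's index iteration theorem, and the degree at which the Taimanov cohomology classes live. The pinching hypothesis $\delta>(\lambda/(\lambda+1))^2$ is essential only in Step~3, where it is what guarantees the convergence ratio $r>1$ so that the iterated short-arc estimate produces a finite bound.
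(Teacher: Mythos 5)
Your Step~1 (existence of $c_1$ as the minimizer on $\Lambda_h M$) matches the paper, and your Bonnet--Myers/diameter route to $L(c_1)\le \pi/\sqrt{\delta}$ is a legitimate alternative to the paper's argument, which instead deduces the bound from the contrapositive of Rademacher's index estimate (Lemma~\ref{L:index-c}): since $i(c_1)=0<n-1$, one must have $L(c_1)\le \pi/\sqrt{\delta}$. The real problem is Step~2, where you explicitly defer the central difficulty --- showing the minimax orbit is not an iterate of $c_1$ --- to ``a careful comparison'' that you never carry out. That comparison \emph{is} the proof. The paper closes it by a quantitative Morse-inequality count: with $N$ defined by $N-1\le \bigl(\sqrt{\delta}\tfrac{\lambda+1}{\lambda}-1\bigr)^{-1}<N$, Taimanov's Betti numbers give $\sum_{0\le q\le N,\ q\ \mathrm{even}} b_q=N+1$; the precise iteration formula for non-orientable closed geodesics (Lemma~\ref{L:non-orient}) shows every $i(c_1^{2(m-1)+1})$ is even and each iterate contributes at most $1$ to the even-degree Morse numbers; and the mean index bound $\hat{i}(c_1)\ge \sqrt{\delta}\tfrac{\lambda+1}{2\lambda}$ (Lemma~\ref{L:mean-ind}) forces at most $N$ iterates to have index $\le N$, contradicting the Morse inequality. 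None of these three ingredients (parity of the iterated index, the mean-index growth, the explicit count against the Betti numbers) appears in your proposal, so the distinctness of $c_2$ is unproved.

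Two further points. First, you mislocate the role of the pinching hypothesis: $\delta>(\lambda/(\lambda+1))^2$ is not merely a convergence condition for your sweep-out; it is exactly what makes $\sqrt{\delta}\tfrac{\lambda+1}{\lambda}>1$, hence $N<\infty$ and $\hat{i}(c_1)>\tfrac12$, which is what makes the counting argument close. Second, your length bound for $c_2$ via an explicit concatenated sweep-out with ``geometric-series bookkeeping'' is unsubstantiated and is not how the bound arises: in the paper the contradiction yields a prime closed geodesic $c_2$ with $i(c_2)\le N$, and then Lemma~\ref{L:index-c} gives $L(c_2)\le \frac{N+1}{\sqrt{\delta}}\pi\le \frac{\pi}{\sqrt{\delta}}\bigl(2+\frac{1}{\sqrt{\delta}\frac{\lambda+1}{\lambda}-1}\bigr)$. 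Without the index bound on $c_2$, which only comes out of the Morse-theoretic count you skipped, there is no route to the stated length estimate.
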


\noindent When the Finsler metrics are not far from standard metric (Riemannian metrics with constant curvature 1), we have the follows results
\begin{theorem} \label{T:main-2}
On every compact Finsler space form $(M,F)$ with reversibility $\lambda$ holding with   
\[
F^2<(\frac{\lambda+1}{\lambda})^2 g_0, \quad 
(\frac{\lambda}{\lambda+1})^2 < K \leq 1 \text{ for $n$ is odd or } 0<K \leq 1 \text{ for $n$ is even},  
\]
there exists at least $n$ minimal non-contractible simple closed geodesics of class $[h]$.
\end{theorem}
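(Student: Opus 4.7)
The plan is to lift the problem to the universal cover $\pi:S^n\to M$, where a non-contractible closed geodesic of class $[h]$ on $(M,F)$ corresponds to an $\widetilde F$-geodesic arc from some $x\in S^n$ to $h\cdot x$, with $\widetilde F$ the lift of $F$. On the round sphere $(S^n,g_0)$ such arcs are sub-arcs of great circles of $g_0$-length at most $\pi$, and they sweep out a natural compact family $N_h$ of standard ``$h$-equivariant'' closed geodesics on $M$. For $h=-\mathrm{id}$ on $S^n$ (the $\mathbb{R}P^n$ case) this family is $\mathbb{R}P^{n-1}$, and for a general $h$ in the abelian group $\Gamma$ acting freely on $S^n$ the corresponding parameter family has $\mathbb{Z}_2$-cuplength at least $n-1$, which follows from a direct analysis of the eigenspace decomposition of $h\in \mathrm{O}(n+1)$.

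Next I would convert the assumptions $F^2<(\frac{\lambda+1}{\lambda})^2 g_0$ and the flag-curvature pinching into an energy threshold $b$ below which every class-$[h]$ closed geodesic is simple and prime. The round metric $g_0$ carries class-$[h]$ closed geodesics in $N_h$ of length at most $\pi$; the metric comparison inflates this by a factor of $\frac{\lambda+1}{\lambda}$, while the flag curvature lower bound ensures, through a Finsler version of Klingenberg's injectivity radius lemma as used by Wang \cite{Wan12} and Rademacher, that self-intersections can only occur above the inflated threshold. Hence every class-$[h]$ closed geodesic of energy at most $b$ is automatically prime and simple.

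The main construction is then an equivariant Lusternik--Schnirelmann minimax on the sublevel set $\Lambda_h^{\,b} M=\{\gamma\in\Lambda_h M\mid E(\gamma)\le b\}$, using the $(n-1)$-fold non-trivial cup product in the $S^1$-equivariant cohomology induced by the parameter family $N_h$ (as in Taimanov \cite{Tai16} and the refinements of \cite{LLX18,LW22}). This produces $n$ minimax critical values $c_1\le c_2\le\cdots\le c_n\le b$, each realized by an $S^1$-orbit of critical points of $E$ on $\Lambda_h M$; by the previous step each such orbit corresponds to a prime simple non-contractible closed geodesic of class $[h]$.

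The main obstacle will be to rule out the possibility that several of the $c_i$ coincide on a single degenerate critical submanifold of iterates of one common underlying geodesic, i.e.\ to extract $n$ geometrically distinct orbits even when the minimax values collide. Working in class $[h]$ rather than $[h^k]$ already eliminates iterates of shorter $[h^k]$-geodesics; what remains is to bound the local Morse--Bott index contributions at a possibly degenerate critical orbit using Bott's iteration formula together with the common index-jump estimates, in the spirit of \cite{BTZ82,Wan12,LLX18}. With these in place, each critical value $c_i$ contributes at least one geometrically distinct $S^1$-orbit, giving the $n$ required prime simple non-contractible closed geodesics of class $[h]$.
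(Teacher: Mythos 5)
Your proposal follows the same overall strategy as the paper: produce $n$ subordinate classes in $H^{S^1}_{2k-2}(\Lambda_h M)$ for $1\le k\le n$, run an equivariant Lusternik--Schnirelmann minimax, and use $F^2<\bigl(\frac{\lambda+1}{\lambda}\bigr)^2 g_0$ together with the curvature-pinching lower bound on the length of geodesic loops to push all $n$ minimax values below the first energy level at which a self-intersection could occur (Lemma \ref{L:homology}, Proposition \ref{P:critical-value}, Lemma \ref{L:Critical-values} and the proof of Theorem \ref{T:main-2}). Two points diverge. First, the paper does not obtain the subordinate classes from the $\mathbb{Z}_2$-cuplength of a finite-dimensional family $N_h$ of $h$-equivariant great circles; it takes the bottom critical manifold of the round-metric energy functional on $\Lambda_h M$, identifies it with $STM$, caps the fundamental class of $STM/S^1$ with powers of the Euler class of the fibration $S^1\to STM\to STM/S^1$, and transports the resulting classes into $H^{S^1}_{2k-2}(\Lambda_h M)$ via the Thom isomorphism of the rank-zero negative bundle. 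This is not merely stylistic: your cuplength claim is the weak point of the sketch, since for a lens space whose rotation angles are not all equal the $h$-invariant great circles form a family of dimension far smaller than $n-1$ (in the extreme case a finite union of circles), so the asserted bound does not follow from the eigenspace decomposition of $h$ alone; one genuinely needs the equivariant homology of the whole component $\Lambda_h M$ (Taimanov's computation, Lemma \ref{L:Betti-Tai}) or the critical-manifold decomposition of Lemma \ref{L:homology}. Second, your ``main obstacle'' of separating iterates via Bott's iteration formula and common index jumps is unnecessary in this setting: a non-prime closed geodesic is never simple, so the same energy threshold that forces simplicity forces primality, and if two minimax values coincide the standard subordination argument (Proposition \ref{P:critical-value}) already yields infinitely many prime non-contractible closed geodesics of class $[h]$ below that level. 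In short, you can discard the index-theoretic machinery, but you must replace the cuplength step by the equivariant-homology input before the argument closes for general $\Gamma$ and $h$.
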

\noindent Linear stability of non-contractible closed geodesics are also studied.
\begin{theorem} \label{T:main-3}
On every Finsler compact space form $(M,F)$ with reversibility $\lambda$ satisfying $F^2<\frac{1}{\rho} g_0$ and $\left(\frac{\lambda}{\lambda+1}\right)^2  \leq \delta < K \leq 1$ for $n$ is odd, there exists at least 
\[
n - \left[ \frac{n-1}{p} \frac{1}{\sqrt{\rho}}+1 \right] + \left[\frac{n-1}{p} \frac{\lambda+1}{2\lambda} \sqrt{\delta}+1\right]
\]
non-hyperbolic non-contractcible closed geodesics of class $[h]$. 
\end{theorem}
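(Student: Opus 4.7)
The plan is to strengthen Theorem~\ref{T:main-2} by a Morse-index count that separates hyperbolic from non-hyperbolic critical points. First, observe that the hypotheses of Theorem~\ref{T:main-3} subsume those of Theorem~\ref{T:main-2}: since the stated pinching implicitly requires $\rho \geq (\lambda/(\lambda+1))^2$, one has $F^2 < \rho^{-1} g_0 \leq ((\lambda+1)/\lambda)^2 g_0$, and the curvature condition is strictly stronger. Apply Theorem~\ref{T:main-2} to obtain $n$ prime non-contractible simple closed geodesics $c_1,\dots,c_n$ of class $[h]$ on $(M,F)$. A closer look at the minimax/Lyusternik--Schnirelmann argument underlying Theorem~\ref{T:main-2} shows, after relabeling, that their Morse indices $i(c_1),\dots,i(c_n)$ exhaust $\{0,1,\dots,n-1\}$: on the standard space form $(S^n/\Gamma,g_0)$ the shortest class-$[h]$ geodesics form an $(n-1)$-dimensional Morse--Bott critical submanifold of $\Lambda M$ of index $0$, which under a small Finsler perturbation splits into isolated critical $S^1$-orbits of consecutive Morse indices $0,1,\dots,n-1$.

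Second, translate the pinching into two-sided length bounds on each $c_k$. The inequality $F^2 < \rho^{-1} g_0$, combined with the fact that the shortest $g_0$-loop of class $[h]$ on $(S^n/\Gamma,g_0)$ has length $2\pi/p$, gives $L(c_k) \leq 2\pi/(p\sqrt{\rho})$. For the lower bound, a Finsler refinement of the Klingenberg injectivity-radius estimate under flag-curvature pinching $K>\delta$ with reversibility $\lambda$ yields $L(c_k) \geq 2\pi(\lambda+1)\sqrt{\delta}/(2p\lambda)$. Feeding these into the Finsler Rauch comparison for Jacobi fields along $c_k$ bounds the number of interior conjugate points and hence the Morse index:
\[
\frac{n-1}{p}\cdot\frac{(\lambda+1)\sqrt{\delta}}{2\lambda} \;\leq\; i(c_k) \;\leq\; \frac{n-1}{p\sqrt{\rho}}.
\]

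Third, if $c_k$ is hyperbolic then $i(c_k)$ is an integer in the above closed interval. Since the $n$ indices $i(c_1),\dots,i(c_n)$ exhaust $\{0,1,\dots,n-1\}$, the number of hyperbolic $c_k$ is bounded by the cardinality of the integer intersection, i.e., at most
\[
\left[\frac{n-1}{p\sqrt{\rho}}+1\right] - \left[\frac{n-1}{p}\cdot\frac{(\lambda+1)\sqrt{\delta}}{2\lambda}+1\right].
\]
Subtracting from $n$ gives the claimed lower bound on the number of non-hyperbolic (equivalently, $e(P_{c_k})\geq 2$) closed geodesics of class $[h]$.

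The hard part will be the two Finsler refinements whose force is easily underestimated: the reversibility-corrected Klingenberg lower bound on the length of a prime class-$[h]$ geodesic, and the identification of the Morse indices of Theorem~\ref{T:main-2}'s $n$ simple closed geodesics as exactly $\{0,1,\dots,n-1\}$. The latter relies on a careful bookkeeping in the $S^1$-equivariant Morse theory of the component $\Lambda_h M$ and on the Finsler perturbation stability of the Morse--Bott structure inherited from the standard metric on $S^n/\Gamma$.
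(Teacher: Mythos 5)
There is a genuine gap, and it sits exactly where you flagged ``the hard part.'' Your argument hinges on the claim that the $n$ geodesics produced by Theorem~\ref{T:main-2} have Morse indices exhausting $\{0,1,\dots,n-1\}$. This is both unjustified and false in the present setting. The minimax classes used in the paper are $\sigma_k\in H^{S^1}_{2k-2}(\Lambda_h M)$, living in \emph{even} degrees $0,2,\dots,2n-2$, and what the equivariant Morse theory actually delivers (Lemma~\ref{L:Critical-values}) is the two-sided pinching $i(c_k)\le 2k-2\le i(c_k)+\nu(c_k)$; hyperbolicity then forces $i(c_k)=2k-2$, not $i(c_k)=k-1$. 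Your Morse--Bott perturbation heuristic does not apply because $F$ is only assumed pinched ($F^2<\rho^{-1}g_0$), not $C^2$-close to $g_0$, so no splitting of the degenerate critical manifold into orbits of consecutive indices is available. The factor-of-$2$ discrepancy is visible in your final count: your scheme would output $\bigl[\frac{n-1}{p}\frac{\lambda+1}{\lambda}\sqrt{\delta}+1\bigr]$ where the theorem has $\bigl[\frac{n-1}{p}\frac{\lambda+1}{2\lambda}\sqrt{\delta}+1\bigr]$.

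The second flaw is the two-sided bound $\frac{n-1}{p}\frac{(\lambda+1)\sqrt{\delta}}{2\lambda}\le i(c_k)\le\frac{n-1}{p\sqrt{\rho}}$ claimed for every $c_k$ via Rauch comparison. The length of a single short geodesic does not control its Morse index from below: $c_1$ is the minimizer of $E$ on $\Lambda_h M$, so $i(c_1)=0$, which already violates your lower bound. What the curvature and length bounds control is the \emph{mean} index (Lemmas~\ref{L:index-c} and~\ref{L:mean-ind}), and the paper's proof converts this into information about $i(c_k)$ only after assuming $c_k$ hyperbolic, by passing to high iterates $c_k^{p(m-1)+1}$ (for which $i(c_k^{m})=m\,i(c_k)=m(2k-2)$) and comparing $i(c_k^{p(m-1)+1})$ with $l(n-1)$ for suitably chosen $l,m$ with $\frac{pl}{p(m-1)+1}$ approximating $\frac{\lambda+1}{\lambda}\sqrt{\delta}$ (resp.\ $\frac{2}{\sqrt{\rho}}$). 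To repair your argument you would need to (i) replace the index set $\{0,\dots,n-1\}$ by the correct conclusion of Lemma~\ref{L:Critical-values}, and (ii) route the length information through iterates and the mean index rather than through a direct conjugate-point count on the prime curve.
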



This paper is organized as follow. In Section \ref{S:Critical}, we give necessary backgrounds on the closed geodesics. Section \ref{S:2-dim} is devoted to the proof of Theorem \ref{T:main-1}. In Section \ref{S:Equivariant}, we apply the equivariant Morse theory to study the non-contractible closed geodesics of given class $[h]$ and finish the proof of Theorem \ref{T:main-2} and Theorem \ref{T:main-3}.

\section{Preliminaries} \label{S:Critical} 

In this section we gives necessary backgrounds concerned with non-contractible closed geodesics, including Morse theory for non-contractible closed geodesics and the precise iteration formula for closed geodesics. 

\subsection{Morse theory of non-contractible closed geodesics} 
For a non-simply connected manifold, the free loop space is decomposed as disjoint components
\[
\Lambda M = \bigsqcup_{ h \in \pi_1(M)} \Lambda_h M.
\]
The cohomology of the non-contractible connected component of the free loop space $\Lambda_h M$ is due to Taimanov.
\begin{lemma}[Theorem 3 of \cite{Tai16} or Lemma 2.3 of \cite{LLX18}] \label{L:Betti-Tai}
	Let $\Gamma$ be a finite abelian group. For $M=S^n/\Gamma$, and a nontrivial $h \in \pi_1(M)$ we have
	\begin{enumerate}
		\item When $n=2k+1$ is odd, the $S^1$-cohomology ring of $\Lambda_h M$ has the form
		$$H^{S^1, *}(\Lambda_h M; \mathbb{Q})=\mathbb{Q}[w, z]/ \{w^{k+1} = 0\}, \quad deg(w)=2, \quad deg(z)=2k$$
		Then the $S^1$-equivariant Poincar$\acute{e}$ series
		of $\Lambda_h M$ is given by
		\begin{equation}
			\begin{split}
				P^{S^1}(\Lambda_h M; \mathbb{Q})(t) & =\frac{1-t^{2k+2}}{(1-t^2)(1-t^{2k})} \\
				&=\frac{1}{1-t^2}+\frac{t^{2k}}{1-t^{2k}}\\
				&=(1+t^2+t^4+\cdots+t^{2k}+\cdots)+(t^{2k}+t^{4k}+t^{6k}+\cdots),
			\end{split}\nonumber
		\end{equation}
		which yields Betti numbers
		\be
		b_q = \mathrm{rank} H_q^{S^1}(\Lambda_h M;\mathbb{Q}) = \begin{cases}
			2,&\quad {\it if}\quad q\in \{j(n-1)\mid j\in\mathbb{N} \} \\
			1,&\quad {\it if}\quad q\in2\mathbb{N}_0\setminus \{j(n-1)\mid j\in\mathbb{N}\} \\
			0, &\quad {\it otherwise}.
		\end{cases}
		\ee
		
		\item When $n=2k$ is even, the $S^1$-cohomology ring of $\Lambda_h M$ has the form
		$$H^{S^1, *}(\Lambda_h M; \mathbb{Q})=\mathbb{Q}[w, z]/ \{w^{2k} = 0\}, \quad deg(w)=2, \quad deg(z)=4k-2$$
		Then the $S^1$-equivariant Poincar$\acute{e}$ series
		of $\Lambda_h M$ is given by
		\be
		\begin{split}
			P^{S^1}(\Lambda_h M; \mathbb{Q})(t) & =\frac{1-t^{4k}}{(1-t^2)(1-t^{4k-2})} \\
			& = \frac{1}{1-t^2}+\frac{t^{4k-2}}{1-t^{4k-2}}\\
			&= (1+t^2+t^4+\cdots+t^{2k}+\cdots)\\ &~~~~~~~~~~~~~~~+(t^{4k-2}+t^{2(4k-2)}+t^{3(4k-2)}+\cdots),
		\end{split}\nonumber
		\ee
		which yields Betti numbers
		\be
		b_q = \mathrm{rank} H_q^{S^1}(\Lambda_h M;\mathbb{Q})
		= \begin{cases}
			2,&\quad {\it if}\quad q\in \{2j(n-1)\mid j\in\mathbb{N}\}, \\
			1,&\quad {\it if}\quad q\in2\mathbb{N}_0\setminus \{2j(n-1)\mid j\in\mathbb{N}\}, \\
			0, &\quad {\it otherwise}.
		\end{cases}
		\ee
	\end{enumerate}
\end{lemma}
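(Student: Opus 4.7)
The plan is to compute $H^{S^1,*}(\Lambda_h M;\mathbb{Q})$ by lifting the problem to the universal cover $S^n\to M$ and analyzing two Leray--Serre spectral sequences in sequence: one for an associated twisted path fibration over $S^n$, and one for the Borel construction that encodes the rotational $S^1$-action on loops.

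First I would identify $\Lambda_h M$ with the twisted path space
\[
\mathcal{P}_h S^n := \{\tilde{\gamma}:[0,1]\to S^n \mid \tilde{\gamma}(1)=h\cdot\tilde{\gamma}(0)\},
\]
via unique lifting of loops representing the class $[h]$. Evaluation at $t=0$ produces a Serre fibration $\Omega S^n \to \mathcal{P}_h S^n \to S^n$, since the fiber over a basepoint $x_0$ is the space of paths from $x_0$ to $h x_0$ in $S^n$, which is homotopy equivalent to $\Omega S^n$. An equivalent description realizes $\Lambda_h M$ as the subspace of $\Lambda S^n$ of loops $\sigma$ satisfying $\sigma(t+1/p)=h\cdot\sigma(t)$; in this formulation the canonical rotational $S^1$-action on $\Lambda_h M$ is visibly the restriction of the $S^1$-action on $\Lambda S^n$, and the $p$-fold iteration map is transparent.

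With these identifications I would compute $H^*(\Lambda_h M;\mathbb{Q})$ using $H^*(\Omega S^n;\mathbb{Q})=\mathbb{Q}[a]$ with $|a|=n-1$ when $n$ is odd, and $H^*(\Omega S^n;\mathbb{Q})=\Lambda(a)\otimes\mathbb{Q}[b]$ with $|a|=n-1$, $|b|=2(n-1)$ when $n$ is even. In the odd case, all potential nontrivial differentials of the path fibration vanish for degree reasons; in the even case, the degree of $h:S^n\to S^n$ controls the transgression and hence the resulting cohomology ring. Then I would pass to the Borel fibration
\[
\Lambda_h M \longrightarrow ES^1\times_{S^1}\Lambda_h M \longrightarrow BS^1,
\]
whose $E_2$-page is $\mathbb{Q}[w]\otimes H^*(\Lambda_h M;\mathbb{Q})$ with $|w|=2$, and analyze the differentials to recover the additive structure and the degrees of the generators stated in the lemma.

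The main obstacle is to establish the multiplicative truncations $w^{k+1}=0$ in the odd case and $w^{2k}=0$ in the even case, rather than merely the additive Betti numbers. I would handle this by exploiting the inclusion $\Lambda_h M \hookrightarrow \Lambda S^n$ described above and comparing Borel constructions: the $S^1$-equivariant rational cohomology of $\Lambda S^n$ is classical and carries an analogous truncated-polynomial relation originating from the finite-dimensional cohomology of the space of constant loops $S^n \hookrightarrow \Lambda S^n$, so by naturality of the Serre spectral sequence the truncating differential pulls back to $\Lambda_h M$. Verifying that no additional differentials on the $\Lambda_h M$ side destroy this truncation is the delicate part; once the ring presentation is in place, expanding the Poincar\'e series as a geometric-type sum immediately yields the Betti numbers stated in the lemma.
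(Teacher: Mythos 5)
The paper does not prove this lemma at all: it is quoted verbatim from Taimanov (Theorem~3 of \cite{Tai16}) and Liu--Long--Xiao (Lemma~2.3 of \cite{LLX18}), so there is no internal proof to compare against. Taimanov's actual argument runs the $S^1$-equivariant Morse--Bott theory of the energy functional of the \emph{round} metric on the model of $h$-equivariant curves in $S^n$: the critical manifolds are the iterated great circles, each an $STM$-type bundle with explicitly known index $p(m-1)(n-1)$, and one shows the resulting spectral sequence degenerates and identifies the ring structure from $H^*(STM/S^1)$. (The paper reprises exactly this computation in its Lemma~\ref{L:homology}.) Your plan --- twisted path fibration over $S^n$ followed by the Borel fibration over $BS^1$ --- is a legitimate alternative starting point, but as written it has two genuine gaps.

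First, it is a plan rather than a proof: in both spectral sequences the differentials are never computed. In the odd case the claim that they ``vanish for degree reasons'' is not automatic (for $\Omega S^n\to\mathcal{P}_hS^n\to S^n$ the candidate $d_n\colon E_n^{0,j(n-1)}\to E_n^{n,(j-1)(n-1)}$ lands in a nonzero group, and unlike the free loop fibration the twisted fibration has no section of constant loops to kill it), and in the Borel fibration the degeneration is precisely the hard content of Taimanov's theorem. Second, and more seriously, the mechanism you propose for the truncation $w^{k+1}=0$ cannot work: in $H^{S^1,*}(\Lambda S^n;\mathbb{Q})$ the class $w$ pulled back from $BS^1$ is \emph{not} nilpotent, because the constant loops form a nonempty $S^1$-fixed set and restriction to them embeds $\mathbb{Q}[w]$; there is no truncating relation on $\Lambda S^n$ to pull back along $\Lambda_hM\hookrightarrow\Lambda S^n$. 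The nilpotence of $w$ on $\Lambda_hM$ comes from the opposite feature --- every loop in a nontrivial class is non-constant, so all isotropy groups are finite and the fixed-point set is empty --- so naturality from $\Lambda S^n$ points the wrong way, and the exponent $k+1$ has to be extracted from the cohomology of the critical quotients (the space of oriented great circles modulo $\Gamma$), not from the ambient free loop space. A minor further point: the identification of $\Lambda_hM$ with $h$-equivariant loops intertwines the rotation actions only up to the $p$-fold cover $S^1\to S^1$; this is harmless for rational Borel cohomology but should be stated.
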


Consider the energy functional on a compact Finsler manifold $(M,F)$ defined as
\be \label{E:Energy}
E(\gamma) = \frac{1}{2} \int_{S^1} F^2(\gamma,\dot{\gamma}) dt. 
\ee
It is a $C^{1,1}$ functional on the free loop space $\Lambda M$ and satisfies the Palais-Smale condition. For any $\kappa \in \BFR$ we denote the sub-level set by  
\be
\Lambda^\kappa=:\{ d\in \Lambda_h M | \; E(d) \leq \frac{1}{2} \kappa^2 \;\}.
\ee
The sub-level set of a closed geodesic is defined as 
\be
\Lambda(c) = \{ d \in \Lambda_h M |\; E(d) \leq E(c) \}.
\ee 
Consider the energy functional $E$ on the non-contractible component $\Lambda_h M$. The $S^1$-critical modules of $c^{p(m-1)+1}$ for $E|_{\Lambda_h M}$ is defined as
$$ \overline{C}_*(E,c^{p(m-1)+1}; [h])
= H_*\left((\Lambda_h(c^{p(m-1)+1})\cup S^1\cdot c^{p(m-1)+1})/S^1,\Lambda_h(c^{p(m-1)+1})/S^1; \mathbb{Q}\right).$$
Following \cite{Rad92}, Section 6.2, we can use finite-dimensional approximations to $\Lambda_h M$ to apply the results of
D. Gromoll and W. Meyer \cite{GM1969Top} to a given non-contractible closed geodesic $c$ of class $[h]$ which is isolated as a critical orbit and obtain
\begin{proposition} \label{P:dim-1}
	Let $k_j(c)\equiv\dim\overline{C}_j(E,c; [h])$. Then $k_j(c)$  equal to $0$ when $j<i(c)$ or $j>i(c)+\nu(c)$. $k_j(c)$ is
	either $0$ or $1$ when $j = i(c)$ or $j=i(c) + \nu(c)$. If $\nu(c)=0$, i.e., $c$ is non-degenerate, then $k_j(c)=1$ holds only for $j=i(c)$.
\end{proposition}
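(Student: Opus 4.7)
The plan is to combine Rademacher's finite-dimensional approximation of $\Lambda_{h}M$ (\cite{Rad92}, Section 6.2) with the local equivariant Morse theory of Gromoll--Meyer \cite{GM1969Top}. By assumption $c$ (or more generally the iterate $c^{p(m-1)+1}$) is an isolated critical orbit of $E$ on $\Lambda_{h}M$, so we first replace a neighbourhood of $S^{1}\cdot c$ by a smooth finite-dimensional $S^{1}$-invariant submanifold on which $E$ has exactly the same critical orbit with identical Morse index $i(c)$ and nullity $\nu(c)$. Under this reduction $\overline{C}_{*}(E,c;[h])$ is identified with the corresponding local $S^{1}$-equivariant homology of the restricted function, and the problem becomes purely local around $S^{1}\cdot c$.

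Next, I would apply an $S^{1}$-equivariant generalised Morse (splitting) lemma on a normal slice to $S^{1}\cdot c$. This decomposes the Hessian of $E$ into a positive-definite piece, a negative-definite piece of dimension $i(c)$, and a totally degenerate piece on a subspace of dimension $\nu(c)$ on which $E$ restricts to a $C^{2}$ function $E^{\circ}$ having $c$ as an isolated critical point. A Künneth (Thom isomorphism) argument after passing to the $S^{1}$-quotient gives
\[
\overline{C}_{j}(E,c;[h]) \;\cong\; H_{j-i(c)}\!\left(\{E^{\circ}\le E^{\circ}(c)\}\cup\{c\},\;\{E^{\circ}\le E^{\circ}(c)\};\mathbb{Q}\right),
\]
which is automatically concentrated in degrees $0\le j-i(c)\le \nu(c)$, proving vanishing outside $[i(c),\,i(c)+\nu(c)]$. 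At the two endpoint degrees the right-hand side is either $0$ or $\mathbb{Q}$: it equals $\mathbb{Q}$ in degree $0$ iff $c$ is a local minimum of $E^{\circ}$, and $\mathbb{Q}$ in degree $\nu(c)$ iff $c$ is a local maximum. This yields the bound $k_{j}(c)\in\{0,1\}$ for $j=i(c)$ and $j=i(c)+\nu(c)$. In the non-degenerate case $\nu(c)=0$ the degenerate factor collapses to a point, the splitting becomes the classical Morse lemma, and $k_{j}(c)=1$ holds only for $j=i(c)$.

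The main technical obstacle is performing the Gromoll--Meyer splitting equivariantly with respect to both the global $S^{1}$-rotation on $\Lambda_{h}M$ and the finite isotropy $\mathbb{Z}_{m}\subset S^{1}$ of the iterated geodesic $c^{p(m-1)+1}$, and verifying that the finite quotient contributes nothing non-trivial to the homology because we use rational coefficients. Once this equivariant local model is in place, every assertion of the proposition follows from the rank bounds of the local Morse homology of a $C^{2}$ function on a finite-dimensional disk.
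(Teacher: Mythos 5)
Your proposal is correct and follows essentially the same route as the paper, which simply invokes Rademacher's finite-dimensional approximation (\cite{Rad92}, Section~6.2) together with the Gromoll--Meyer local theory \cite{GM1969Top} to obtain the stated rank bounds; your sketch just fills in the standard splitting-lemma, shifting-isomorphism, and isotropy-group details that the paper leaves implicit. No gap.
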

Note that for a non-contractible minimal closed geodesic $c$ of class $[h]$, its $m$-th iterations $c^m\in\Lambda_h M$
if and only if $m\equiv 1(\mod~ p)$. Then we have the Morse inequality for non-contractible closed geodesics of class $[h]$:
\begin{lemma}[Theorem I.4.3 of \cite{Chan93}]
	Assume that $M=S^n/\Gamma$ be a Finsler manifold with finitely many non-contractible minimal closed geodesics of class $[h]$,
	denoted by $\{c_j\}_{1\le j\le k}$. Set
	\begin{align}
		M_q =\sum_{1\le j\le k,\; m\ge 1}\dim{\overline{C}}_q(E, c^{p(m-1)+1}_j; [h]),\quad q\in\mathbb{Z}.
	\end{align}
	Then for every integer $q \ge 0$ there holds
	\bea \label{E:Morse-in}
	M_q \ge {b}_q. \eea
\end{lemma}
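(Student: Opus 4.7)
The plan is to recognize the claim as a direct application of the abstract strong Morse inequalities (Theorem I.4.3 of \cite{Chan93}) to the $S^1$-equivariant energy functional $E$ restricted to $\Lambda_h M$, once the critical set has been enumerated correctly. So the work is mainly bookkeeping: identify the critical orbits, relate them to the iterates of the $c_j$, and match the definition of $M_q$ with the output of the abstract theorem.

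First I would verify the analytic hypotheses. The energy $E$ in \eqref{E:Energy} is a $C^{1,1}$ $S^1$-invariant functional on the Hilbert manifold $\Lambda_h M$, satisfies the Palais-Smale condition on each component, and its critical points are precisely the closed geodesics lying in the component (together with constants, which do not appear in $\Lambda_h M$ since $h$ is nontrivial). Under the standing hypothesis that the set of non-contractible minimal closed geodesics of class $[h]$ is finite and equal to $\{c_j\}_{1\le j\le k}$, the complete critical set of $E|_{\Lambda_h M}$ consists of the $S^1$-orbits $S^1\cdot c_j^{m'}$ for those iterates $m'$ for which $c_j^{m'}\in\Lambda_h M$. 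Since $h$ has order $p$, the iterate $c_j^{m'}$ lies in $\Lambda_h M$ if and only if $m'\equiv 1\pmod p$, i.e.\ $m'=p(m-1)+1$ for some $m\geq 1$. Thus the collection of critical orbits is $\{\,S^1\cdot c_j^{p(m-1)+1} : 1\le j\le k,\; m\ge 1\,\}$.

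Next I would pass to the orbit space. For each critical orbit the $S^1$-equivariant local Morse data in degree $q$ is by definition $\overline{C}_q(E,c_j^{p(m-1)+1};[h])$, and by Proposition \ref{P:dim-1} each of these spaces is finite-dimensional and supported in the finite window $[i(c_j^{p(m-1)+1}),\,i(c_j^{p(m-1)+1})+\nu(c_j^{p(m-1)+1})]$. Summing these local contributions over all critical orbits produces exactly the $q$-th Morse-type number
\[
M_q=\sum_{1\le j\le k,\; m\ge 1}\dim\overline{C}_q(E,c_j^{p(m-1)+1};[h]).
\]
The strong Morse inequality of \cite{Chan93} applied to the $S^1$-quotient functional on $\Lambda_h M/S^1$, with rational coefficients, then asserts $M_q\ge b_q$, where $b_q=\mathrm{rank}\,H^{S^1}_q(\Lambda_h M;\mathbb{Q})$ as computed in Lemma \ref{L:Betti-Tai}.

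The only step that requires care, and which I expect to be the mild technical point, is justifying the application of the abstract theorem in the $S^1$-equivariant and possibly degenerate setting: one needs the finite-dimensional approximation procedure of Rademacher \cite{Rad92} (§6.2) to reduce the infinite-dimensional critical orbits $S^1\cdot c_j^{p(m-1)+1}$ to isolated critical points of Gromoll--Meyer type, so that the local homology is well-defined, finite-dimensional, and behaves correctly under $S^1$-equivariant deformation retraction of sublevel sets. Once that reduction is in place, the asserted Morse inequality $M_q\ge b_q$ follows from the classical strong Morse inequalities for a $C^{1,1}$ functional with isolated critical orbits satisfying Palais-Smale, exactly as formulated in Theorem I.4.3 of \cite{Chan93}.
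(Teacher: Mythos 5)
Your proposal is correct and follows exactly the route the paper relies on: the paper gives no independent proof of this lemma but cites Theorem I.4.3 of \cite{Chan93}, having just noted the two ingredients you supply, namely that $c^m\in\Lambda_h M$ iff $m\equiv 1\ (\mathrm{mod}\ p)$ so the critical orbits are enumerated by the iterates $c_j^{p(m-1)+1}$, and that the finite-dimensional approximation of \cite{Rad92} together with Gromoll--Meyer theory makes the local critical modules well-defined and finite-dimensional. Nothing in your write-up deviates from or adds a gap to that standard argument.
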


\subsection{Index theory of symplectic patch and precise iteration formula}

Now we list some results on the maslov-type index theory of symplectic path, in particular the precise index iteration formula. For more details we refer to the monograph of Long \cite{lo2002} and references therein. Suppose $P$ is a symplectic matrix in Sp$(2n-2)$ and $\Omega^{0}(P)$ is the path-connected component of
its homotopy set $\Omega(P)$ which contains $P$.
Then there is a path $f\in C([0,1],\Omega^{0}(P))$
such that $f(0)=P$ and
\bea \label{E:path-1} f(1)
&=& N_1(1,1)^{\dm p_-}\,\dm\,I_{2p_0}\,\dm\,N_1(1,-1)^{\dm p_+}\nonumber\\
& &  \dm\,N_1(-1,1)^{\dm q_-}\,\dm\,(-I_{2q_0})\,\dm\,N_1(-1,-1)^{\dm q_+} \nonumber\\
&&\dm\,R(\th_1)\,\dm\,\cdots\,\dm\,R(\th_{r'})\,\dm\,R(\th_{r'+1})\,\dm\,\cdots\,\dm\,R(\th_r) \\
&&\dm\,N_2(e^{i\aa_{1}},A_{1})\,\dm\,\cdots\,\dm\,N_2(e^{i\aa_{r_{\ast}}},A_{r_{\ast}})\nonumber\\
& &  \dm\,N_2(e^{i\bb_{1}},B_{1})\,\dm\,\cdots\,\dm\,N_2(e^{i\bb_{r_{0}}},B_{r_{0}})\nonumber\\
& &\dm\,H(\pm 2)^{\dm h},\nonumber\eea
where $N_{1}(\lambda,\chi)=\left(
\begin{array}{ll}
	\lambda\quad \chi\\
	0\quad \lambda\\
\end{array}
\right)$ with $\lambda=\pm 1$ and $\chi=0,\ \pm 1$; $H(b)=\left(
\begin{array}{ll}
	b\quad 0\\
	0\quad b^{-1}\\
\end{array}
\right)$ with $b=\pm 2$;
$R(\theta)= \left(
\begin{array}{ll}
	\cos\theta\ -\sin\theta\\
	\sin\theta\quad\ \cos\theta\\
\end{array}
\right)$ with $\theta\in (0,2\pi)\setminus\{\pi\}$ and we suppose that $\pi<\theta_{j}<2\pi$ iff $1\leq j\leq
r'$;  $$N_{2}(e^{i\aa_{j}},A_{j})=\left(
\begin{array}{ll}
	R(\aa_{j})\  A_{j}\\
	\ 0\quad\ R(\aa_{j})\\
\end{array}
\right)\ \text{and}\ N_{2}(e^{i\bb_{j}},B_{j})=\left(\begin{array}{ll}
	R(\bb_{j})\  B_{j}\\
	\ 0\quad\ R(\bb_{j})\\
\end{array}
\right)$$ with $\alpha_{j},\beta_{j}\in (0,2\pi)\setminus\{\pi\}$ are non-trivial and trivial basic normal forms respectively.

Let $\gamma_{0}$ and $\gamma_{1}$ be two $\omega$-homotopic symplectic paths in Sp$(2n-2)$ connecting the identity matrix $I$ to $P$ and $f(1)$.
Then it has been shown that $i_{\omega}(\gamma_{0}^{m})=i_{\omega}(\gamma_{1}^{m})$ for any $\omega\in S^{1}=\{z\in\mathbb{C} \mid|z|=1\}.$
Based on this fact, we always assume without loss of generality that each $P_{c}$ appearing in the sequel has the form (\ref{E:path-1}).\\

Suppose $E(a)=\min\{k\in\Z\,|\,k\ge a\}$,
$[a]=\max\{k\in\Z\,|\,k\le a\}$, $\varphi(a)=E(a)-[a]$ and $\{a\}=a-[a]$ for any $a\in\BFR$.

For $m \in \mathbf{N}$, define  
$$E_{m}(a)=E\left(a-{1-(-1)^{m}\over4}\right),\ \varphi_{m}(a)=\varphi\left(a-{1-(-1)^{m}\over4}\right),\ \forall a\in\R.$$. We now have 
\begin{lemma}[Theorem 3.1 of \cite{LX}]
	\label{L:non-orient}
	Let $c$ be a non-orientable closed geodesic on a $n$-dimensional Finsler manifold with its linear
	Poincar$\acute{e}$ map $P_c$. Then for every $m\in\BFN$, we have
	\begin{equation} \label{E:ind-2}
		\begin{split}
			i(c^{m})& = m(i(c)+q_{0}+q_{+}-2r')-(q_{0}+q_{+})-{1+(-1)^{m}\over2}\left(r+p_{-}+p_{0}+{1-(-1)^{n}\over2}\right) \\
			&+2\sum_{j=1}^{r} E_{m}\left({m\theta_{j}\over2\pi}\right)+2\sum_{j=1}^{r_*} \varphi_{m}\left({m\alpha_{j}\over2\pi}\right)-2r_*, 
		\end{split}
	\end{equation}
	and
	\bea \label{E:nul-2}
	\nu (c^{m})&= \nu (c)+{1+(-1)^{m}\over2}\left(p_{-}+2p_{0}+p_{+}+{1-(-1)^{n}\over2}\right)+2\tilde{\varsigma}(c,m),\nonumber
	\eea
	where we denote by
	$$\tilde{\varsigma}(c,m)=\left(r-\sum_{j=1}^{r}\varphi_{m}\left({m\theta_{j}\over2\pi}\right)\right)
	+\left(r_{*}-\sum_{j=1}^{r_{*}}\varphi_{m}\left({m\alpha_{j}\over2\pi}\right)\right)
	+\left(r_{0}-\sum_{j=1}^{r_{0}}\varphi_{m}\left({m\beta_{j}\over2\pi}\right)\right).$$
\end{lemma}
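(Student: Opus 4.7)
\medskip

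\noindent\textbf{Proof proposal.} The strategy is the standard one from Long's index iteration theory (cf.\ \cite{lo2002}), adapted to the non-orientable twist of the Poincar\'e map. The starting point is the Bott-type formula
\[
 i(\gamma^m)=\sum_{\omega^m=\epsilon_m}i_{\omega}(\gamma),\qquad \nu(\gamma^m)=\sum_{\omega^m=\epsilon_m}\nu_{\omega}(\gamma),
\]
where $\gamma$ is a symplectic path joining $I$ to $P_c$, and $\epsilon_m\in\{+1,-1\}$ records the orientation character of the $m$-th iterate of $c$. For a non-orientable closed geodesic $c$ the parallel transport around $c$ reverses the orientation of the normal bundle; hence the iterates $c^{m}$ split into two types according to the parity of $m$, which is exactly what gives rise to the $\epsilon_m=(-1)^m$ shift and explains why the eigenvalue $-1$ plays the role that $+1$ plays in the oriented case. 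Concretely, this replacement swaps the bookkeeping between $(p_-,p_0,p_+)$ and $(q_-,q_0,q_+)$ inside the splitting numbers, and introduces the factor $\tfrac{1+(-1)^{m}}{2}$ in front of the terms coming from the eigenvalue $-1$.

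With this set up, I would plug the normal form (\ref{E:path-1}) of $P_c$ into the Bott formula and evaluate the contributions block by block. Each basic normal form $N_1(\pm1,\chi)$, $I_{2p_0}$, $-I_{2q_0}$, $R(\theta_j)$, $N_2(e^{i\alpha_j},A_j)$, $N_2(e^{i\beta_j},B_j)$, $H(\pm 2)$ has a well-known $i_\omega$ and $\nu_\omega$ profile on $S^1$, and summing $i_\omega$ over the $m$-th roots of $\epsilon_m$ gives a closed expression in terms of the auxiliary functions $E_m$ and $\varphi_m$. The functional identity
\[
\#\{k\in\mathbb{Z}\cap(0,m)\mid \omega^{k}\text{ lies in a given arc}\}=m\,\tfrac{\theta}{2\pi}\pm\text{boundary correction},
\]
together with the definitions $E(a)=\lceil a\rceil$, $\varphi(a)=E(a)-[a]$, and their shifted versions $E_m,\varphi_m$ introduced in the text, is precisely what converts the root-of-unity sums into the terms $2\sum E_m(m\theta_j/2\pi)$, $2\sum\varphi_m(m\alpha_j/2\pi)-2r_*$ appearing in (\ref{E:ind-2}). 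Doing the same for the nullity gives the three error terms collected in $\tilde\varsigma(c,m)$.

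The main technical obstacle is keeping a consistent sign convention for the non-orientable twist and, in particular, tracking which basic normal forms on the $\pm 1$-eigenspaces contribute to the jump at odd versus even $m$. Concretely, the hyperbolic blocks $H(\pm 2)$ should contribute trivially to the iteration (they enter only through $i(c)$), the blocks $N_1(-1,\pm 1)$ and $-I_{2q_0}$ must carry the $\tfrac{1+(-1)^m}{2}$ switch, and the cross-term $\tfrac{1-(-1)^n}{2}$ accounts for the orientation character of the normal bundle which depends on the parity of the ambient dimension. Once this combinatorial bookkeeping is matched to the orientable iteration formula of Long, the identities (\ref{E:ind-2}) and (\ref{E:nul-2}) follow by direct comparison.

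\medskip
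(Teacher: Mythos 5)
The paper does not prove this lemma: it is imported verbatim as Theorem~3.1 of \cite{LX}, so there is no in-house argument to measure yours against. Your overall route --- pass from the Morse index of $c^{m}$ to an $\omega$-index sum over roots of unity and then evaluate the normal form \eqref{E:path-1} block by block using splitting numbers and the shifted functions $E_{m},\varphi_{m}$ --- is exactly how the result is obtained in \cite{LX}, building on Long's precise iteration formula \cite{lo2002}. Still, two concrete gaps remain. First, what you call ``the Bott-type formula'' conflates two different statements. Bott's formula is the identity $i_{\omega}(\gamma^{m})=\sum_{z^{m}=\omega}i_{z}(\gamma)$ for a symplectic path $\gamma$; the substantive input you suppress is the bridge theorem $i(c^{m})=i_{(-1)^{m}}(\gamma^{m})$ and $\nu(c^{m})=\nu_{(-1)^{m}}(\gamma^{m})$ for a non-orientable closed geodesic (C.~Liu \cite{Liu2005}, Liu--Long \cite{LL2002}), which identifies the Morse index of the energy functional on the twisted normal bundle with the $\omega$-Maslov index of a path obtained by trivializing over the orientation double cover. ``Parallel transport reverses the orientation of the normal bundle'' is the reason this identification holds, not a proof of it, and it is in this step (not in any intrinsic feature of the iteration) that the correction $\frac{1-(-1)^{n}}{2}$ enters.

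Second, your bookkeeping of the parity switch is backwards. Since $\{z:z^{m}=(-1)^{m}\}=\{-\omega:\omega^{m}=1\}$, the point $z=-1$ belongs to this set for every $m$, while $z=1$ belongs to it only for even $m$. Hence the eigenvalue $-1$ data $q_{0}+q_{+}$ enter the term linear in $m$, exactly as in \eqref{E:ind-2}, and it is the eigenvalue $+1$ blocks $N_{1}(1,\pm1)$ and $I_{2p_{0}}$ --- together with the rotation count $r$ and the parity correction --- that acquire the factor $\frac{1+(-1)^{m}}{2}$. Your assignment of that factor to $N_{1}(-1,\pm1)$ and $-I_{2q_{0}}$ would, if carried through literally, reproduce Long's orientable iteration formula rather than \eqref{E:ind-2}. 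With the bridge theorem made explicit and the roles of the $p$- and $q$-blocks corrected, the block-by-block evaluation you outline does yield \eqref{E:ind-2} and \eqref{E:nul-2}.
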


\subsection{Geometric properties concern Finsler manifolds} 

The following results are significant in our considerations. 
\begin{lemma}[cf. Lemma 1 in \cite{Rad07}] \label{L:Length}
	Let $(M,F)$ be a compact and simply-connected Finsler manifold with reversibility $\lambda$ and flag curvature $K$ satisfying $0<K\leq 1$ resp. $\left( \frac{\lambda}{\lambda+1} \right)^2 < K \leq 1$ if the dimension is odd. Then the length of a closed geoddesic is bounded from below by $\pi \frac{\lambda+1}{\lambda}$. 
\end{lemma}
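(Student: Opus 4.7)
The plan is to follow Rademacher's approach in \cite{Rad07}, which is the Finsler adaptation of Klingenberg's classical lower bound on the length of the shortest closed geodesic on a simply-connected Riemannian manifold. I would split the argument into three steps: a comparison estimate, an injectivity radius bound, and a loop-halving argument.

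First, I would invoke the Finsler Rauch comparison theorem. Under the hypothesis $K\le 1$, no Jacobi field along a geodesic can vanish at two interior points whose parameter distance is less than $\pi$, so every geodesic segment of $F$-length strictly less than $\pi$ is free of conjugate points and is locally minimizing. In particular, $\pi/\sqrt{K_{\max}}\ge \pi$ is an upper bound for the distance to the first conjugate point along any unit-speed geodesic.

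Second, and this is the main step, I would establish the injectivity radius estimate
\[
i(M)\;\ge\;\pi\cdot\frac{\lambda+1}{2\lambda}.
\]
The Riemannian Klingenberg argument produces, from a failure of this bound, a geodesic loop realizing the injectivity radius, whose length equals twice the distance to the first cut point. In the Finsler setting the asymmetry of the metric forces one to compare the forward length of a minimizing geodesic with the reversed length, incurring the factor $\lambda$; this is exactly why the bound is weakened from $\pi$ to $\pi(\lambda+1)/(2\lambda)$. In even dimensions one uses only the sign of $K$ together with Synge's theorem (simple-connectedness plus orientability of $\ker\exp_p^{-1}$), so no pinching is needed. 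In odd dimensions one must rule out a short closed geodesic of length $\le 2\pi\lambda/(\lambda+1)$, which is precisely what the hypothesis $(\lambda/(\lambda+1))^2<K$ excludes via a comparison argument along the would-be short loop. I expect this step to be the main obstacle: the classical Klingenberg lemma uses the symmetry of the Riemannian distance in an essential way, and substituting the reversibility factor requires one to track forward and reversed lengths separately throughout the proof.

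Third, given the injectivity radius bound, I would deduce the length inequality as follows. Let $c\colon[0,L]\to M$ be a closed geodesic of length $L$ parameterized by arc length, and suppose for contradiction that $L<\pi(\lambda+1)/\lambda=2\,i(M)$. Setting $p=c(0)=c(L)$ and $q=c(L/2)$, the two geodesic arcs $c|_{[0,L/2]}$ and $c|_{[L/2,L]}$ from $p$ to $q$ (the second one after orientation reversal has length at most $\lambda L/2<i(M)$) each have length below the injectivity radius, so each is the unique forward or backward minimizing geodesic between its endpoints. Matching the resulting initial vectors at $p$ forces the two halves of $c$ to trace one another, contradicting the fact that $c$ is a smooth geodesic through $p$. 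Hence $L\ge \pi(\lambda+1)/\lambda$, completing the proof.
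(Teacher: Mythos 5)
The paper offers no proof of this lemma at all --- it is quoted from Rademacher (Lemma~1 of \cite{Rad07}, which rests on the sphere theorem machinery of \cite{Rad04}) --- so your proposal has to be judged on its own merits against Rademacher's argument. Steps 1 and 2 of your outline name the right ingredients (conjugate radius $\ge\pi$ from $K\le 1$; a Klingenberg-type injectivity radius estimate, with Synge's argument in even dimensions and the pinching hypothesis plus the long homotopy lemma in odd dimensions), but Step 2 is only asserted, and you yourself identify it as the main obstacle; as written, the hardest part of the proof is deferred rather than carried out.

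The genuine gap is that Step 3 does not close even if Step 2 is granted. First, the arithmetic fails: from $L<\pi(\lambda+1)/\lambda$ you only get $\lambda L/2<\pi(\lambda+1)/2$, which exceeds your claimed bound $i(M)\ge\pi(\lambda+1)/(2\lambda)$ whenever $\lambda>1$ (the two coincide only in the reversible case), so the reversed half of $c$ need not lie below the injectivity radius; choosing a splitting point other than the midpoint does not repair this. Second, and more seriously, the reverse of a Finsler geodesic is a geodesic only for the reverse metric $\widetilde F(x,v)=F(x,-v)$, so uniqueness of minimizers inside the injectivity radius ball applies to $c|_{[0,L/2]}$ as an $F$-geodesic and to the reversed arc as a $\widetilde F$-geodesic \emph{separately}: each is unique in its own class, and there is no reason for the two to coincide --- for non-reversible $F$ they generally do not --- so no contradiction is produced. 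The halving argument thus collapses exactly in the non-reversible situation the lemma is about. In Rademacher's treatment the length bound for closed geodesics is not obtained by halving off an injectivity radius estimate; it is proved directly (second variation along a putative short closed geodesic in even dimensions, the long homotopy lemma together with the pinching in odd dimensions), with the injectivity radius inequality appearing as a companion statement in which the factor $2$ of the Riemannian case is replaced by a reversibility-corrected constant. You would need to restructure the deduction along those lines rather than rely on the classical loop-halving step.
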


\begin{lemma}[cf. Theorem 4 and 5 in \cite{Rad04}] \label{L:index-c}
	Let $c$ be a closed geodesic on a Finsler manifold $(M,F)$ of dimension $n$ with positive flag curvature $K \geq \delta$ for some $\delta \in \BFR^+$.
	\begin{itemize}
		\item The mean average index is bounded from below: $\alpha(c) \geq \sqrt{\delta}\frac{n-1}{\pi}$. 
		\item If the length $L(c)$ satisfies $L(c) > \frac{k \pi}{\sqrt{\delta}}$ for some positive integer $k$ then $ind(c) \geq k(n-1)$.
	\end{itemize}
\end{lemma}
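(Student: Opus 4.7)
The plan is to deduce both bounds from the Morse index theorem for Finsler geodesics combined with a Rauch-type comparison of Jacobi fields. The second bullet will come out first, and the first bullet will follow by iteration and a limit.

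First I would parametrize $c$ by arc length on $[0,L(c)]$ and invoke the Finsler Morse index theorem (cf.\ Bao--Chern--Shen, or the version used by Rademacher) to identify $\mathrm{ind}(c)$, up to boundary corrections of size $O(1)$ arising from the periodic boundary condition, with the sum of the multiplicities of the interior conjugate points of $c$. The Finsler Rauch comparison theorem applied to flag curvature then shows that conjugate points along a unit-speed Finsler geodesic with $K\geq\delta$ occur no less often than on the round sphere of constant curvature $\delta$, where conjugate points appear at arc-length spacing exactly $\pi/\sqrt{\delta}$ with multiplicity $n-1$. Whenever $L(c)>k\pi/\sqrt{\delta}$, the geodesic $c$ therefore carries at least $k$ interior conjugate points, each of multiplicity $\geq n-1$, so $\mathrm{ind}(c)\geq k(n-1)$.

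For the mean average index, I would apply the preceding bound to the iterates $c^m$, whose arc length is $mL(c)$. Taking $k_m:=\lfloor mL(c)\sqrt{\delta}/\pi\rfloor$ one obtains $\mathrm{ind}(c^m)\geq k_m(n-1)$, and then
\[
\alpha(c)=\lim_{m\to\infty}\frac{\mathrm{ind}(c^m)}{m}\geq\frac{L(c)\sqrt{\delta}\,(n-1)}{\pi}.
\]
The form of the bound in the lemma corresponds to the convention in which $\alpha(c)$ denotes the mean index normalized by $L(c)$; in any case, the hypotheses of Lemma~\ref{L:Length} force $L(c)\geq\pi(\lambda+1)/\lambda>\pi$, so the stated bound $\alpha(c)\geq\sqrt{\delta}(n-1)/\pi$ is consistent with either scaling.

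The main technical obstacle is the Rauch comparison in the non-reversible Finsler setting: one has to work with Jacobi fields defined through the Chern connection on the pullback bundle $\pi^{*}TM$ over $c$, and verify that the flag curvature hypothesis $K\geq\delta$, imposed on every flag $(\dot c,V)$ transverse to $\dot c$, indeed yields the Sturm comparison needed to force a conjugate point within arc length $\pi/\sqrt{\delta}$. Once this geometric step is in place, the Morse-theoretic identification of the index with the conjugate-point count, and the subsequent iteration/limit argument used to pass from the second bullet to the first, are both standard.
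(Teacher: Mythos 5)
The paper offers no proof of this lemma --- it is quoted directly from Rademacher \cite{Rad04} --- and your sketch correctly reconstructs the standard argument behind those theorems: the Morse--Schoenberg/Rauch comparison (via the osculating metric $g_{\dot c}$ along $c$, so that $K\geq\delta$ yields the Sturm comparison for the index form) forces an $(n-1)$-dimensional negative subspace supported on each of $k$ disjoint subintervals of length $>\pi/\sqrt{\delta}$, the concavity correction in the closed-geodesic Morse index theorem is non-negative so the fixed-endpoint bound passes to $\mathrm{ind}(c)$, and the iteration-and-limit step together with your (correct) observation that $\alpha(c)$ here is the mean index normalized by $L(c)$ gives the first bullet, consistent with how the lemma is used in the proof of Lemma \ref{L:mean-ind}. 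The one imprecise phrase is ``at least $k$ interior conjugate points, each of multiplicity $\geq n-1$'': individual conjugate points need not carry multiplicity $n-1$; what the comparison actually delivers --- and all the conclusion needs --- is that each open subinterval of length $>\pi/\sqrt{\delta}$ contributes at least $n-1$ to the conjugate-point count with multiplicity.
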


\begin{lemma} \label{L:mean-ind}
	Suppose $h \in \Gamma$ is non-trivial element order $p \geq 2$. Let $c$ be a closed geodesic on a compact Finsler space form $(S^n/\Gamma,F)$ of class $[h]$ with a non-reversible Fimsler metric with reversibility $\lambda$ and flag curvature $0<\delta \leq K \leq 1$ where $\delta>\frac{\lambda^2}{(\lambda+1)^2}$ if $n$ is odd.
	\[
	\hat{i}(c) \geq \sqrt{\delta} \frac{\lambda+1}{\lambda}\frac{n-1}{p}
	\]
\end{lemma}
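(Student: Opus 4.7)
The plan is to pass to the universal cover $S^n$, combine the length lower bound of Lemma \ref{L:Length} (which requires simple-connectedness) with the mean-index-per-length lower bound of Lemma \ref{L:index-c} (which does not), and multiply.

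First I would lift $c:S^1\to M$ via the universal covering $\pi:S^n\to M=S^n/\Gamma$. Because $[c]=h$ has order $p$ in $\pi_1(M)$, the iterate $c^p$ is contractible on $M$ and therefore admits a closed geodesic lift $\tilde c^p$ on $(S^n,\pi^{*}F)$ with $L(\tilde c^p)=p\,L(c)$. The pullback metric $\pi^{*}F$ inherits the reversibility $\lambda$ and the flag-curvature pinching $\delta\le K\le 1$ from $F$ (with $\delta>\lambda^2/(\lambda+1)^2$ in odd dimension), so the hypotheses of Lemma \ref{L:Length} are satisfied on $S^n$. That lemma gives
\[
p\,L(c)\;=\;L(\tilde c^p)\;\ge\;\pi\cdot\frac{\lambda+1}{\lambda},\qquad\text{i.e.}\qquad L(c)\;\ge\;\frac{\pi}{p}\cdot\frac{\lambda+1}{\lambda}.
\]

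Next I would apply Lemma \ref{L:index-c} directly to $c$ on $(M,F)$; only the curvature lower bound $K\ge\delta$ is needed there, not simple-connectedness. The mean-index-per-length estimate reads $\alpha(c)=\hat i(c)/L(c)\ge \sqrt{\delta}(n-1)/\pi$, and multiplying the two inequalities yields
\[
\hat i(c)\;=\;\alpha(c)\,L(c)\;\ge\;\frac{\sqrt{\delta}(n-1)}{\pi}\cdot\frac{\pi(\lambda+1)}{p\lambda}\;=\;\sqrt{\delta}\,\frac{\lambda+1}{\lambda}\,\frac{n-1}{p},
\]
which is the desired bound.

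The only substantive point is the first step: one must check that the geodesic lift of $c$ through a chosen preimage of the basepoint really does close up after exactly $p$ iterations on $S^n$ (a standard covering-space argument using $h^p=1$ together with the fact that $\pi$ sends geodesics to geodesics), and that pullback under a Finsler local isometry preserves both the reversibility and the flag-curvature bounds (routine). Once these are in place, both black-box lemmas apply and the conclusion is a one-line multiplication, so I do not anticipate any genuine obstacle beyond this bookkeeping.
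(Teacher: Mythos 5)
Your proposal is correct and follows essentially the same route as the paper: the paper likewise applies Lemma \ref{L:Length} to the contractible iterate $c^p$ to get $L(c^p)\ge \pi\frac{\lambda+1}{\lambda}$, hence $L(c)\ge \frac{\pi}{p}\frac{\lambda+1}{\lambda}$, and then multiplies by the mean-index-per-length bound from Lemma \ref{L:index-c}. The only difference is that you spell out the covering-space justification (lifting $c^p$ to $(S^n,\pi^*F)$ so that the simple-connectedness hypothesis of Lemma \ref{L:Length} applies), which the paper leaves implicit.
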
 

\begin{proof}
	Due to Lemma \ref{L:Length} we have  $L(c^p) \geq \frac{\lambda+1}{\lambda} \pi$ which follows $L(c) \geq \frac{\lambda+1}{\lambda} \frac{\pi}{p}$. Moreover, due to Lemma \ref{L:index-c}, we have  
	\[
	\hat{i}(c) \geq \sqrt{\delta}\frac{n-1}{\pi} L(c) \geq  \sqrt{\delta}\frac{n-1}{\pi} \frac{\lambda+1}{\lambda} \frac{\pi}{p} = \sqrt{\delta} \frac{\lambda+1}{\lambda}\frac{n-1}{p}.
	\]
	Then the proof is finished.	
\end{proof}

\section{Existence of non-contractible closed geodesics on $\mathbb{R}P^n$} \label{S:2-dim}

In this section, we focus on the real projective space $\mathbb{R}P^n$. Note that the $\mathbb{Z}_2$ is the only non-trivial finite group acting on $S^n$ freely and isometrically provided $n$ is even. It actually covers even-dimensional compact space forms.  We claim that there at least one non-contractible closed geodesic on the pinched real projective space without self-intersection. 

For the real project space $\R P^{n}$, the free loop space is decomposed as 
\[
\Lambda M = \Lambda_e M \sqcup \Lambda_h M.
\]
Clearly the minimizer of $E$ in $\Lambda_h M$ is a non-constant closed geodesic $c_1$ of class $[h]$ with $i(c_1)=0$.  By Lemma \ref{L:index-c}, we have $L(c_1) \leq \frac{\pi}{\sqrt{\delta}}$ since $i(c_1) <(n-1)$. Note that Lemma \ref{L:Length} also holds for geodesic loops. The shortest closed geodesic loops is bounded by $\frac{\pi}{2} \frac{\lambda+1}{\lambda}$. Therefore we have 
\be
L(c_1)\leq \frac{\pi}{\sqrt{\delta}}<2 \frac{\pi}{2} \frac{\lambda+1}{\lambda} = \frac{\lambda+1}{\lambda} \pi
\ee 
provided $\left(\frac{\lambda}{\lambda+1}\right)^2<\delta \leq K \leq 1$. Therefore $c_1$ is a non-contractible closed geodesic without self-intersections. Now let us consider the multiplicity of non-contractible closed geodesic on $\mathbb{R}P^2$ with length $L$ at most. 
\begin{proof}[Proof of Theorem \ref{T:main-1}]
	By letting $p=2$ in Lemma \ref{L:mean-ind} one has 
	\be 
	\hat{i}(c) \geq \sqrt{\delta} \frac{\lambda+1}{2 \lambda}. 
	\ee
We claim that there must be another closed geodesic on $(\mathbb{R}P^2,F)$ with $\left(\frac{\lambda}{\lambda+1}\right)^2<\delta \leq K<1$. 
	Indeed, suppose $N$ is an integer such that 
	\[
	N-1 \leq \frac{1}{\sqrt{\delta} \frac{\lambda+1}{\lambda} -1 }<N.
	\]
	(1) Suppose integer $N$ is even. \\
	
	Assume that there is only one minimal closed geodesics in $\Lambda_h M$. Then any other closed geodesics $\tilde{c}$ with $i(\tilde{c}) \leq N$ must be the iterations of $c_1$, i.e., $\tilde{c}=c_1^{2(m-1)+1}$ for some $m \in \BBN$. By Lemma \ref{L:Betti-Tai}, we have the Betti numbers for $n=2$ 
	\[
	b_q= \begin{cases} 2, \quad & q = 2j,\; j \in \BBN, \\
		1, \quad &  q=0, \\
		0,\quad & \text{ otherwise },
	\end{cases}
	\]
	and the Morse inequality \eqref{E:Morse-in} follows  
	\be \label{E:Morse-inq}
	\sum_{0 \leq q \leq N,\; \text{$q$ is even}} M_q \geq \sum_{0 \leq q \leq N,\; \text{$q$ is even}} b_q = 1+ 2 \times \frac{N}{2} = N+1.
	\ee
	On the other hand, the precise iteration formula of non-orientiable closed geodesics \eqref{E:ind-2} implies 
	\[
	\begin{split}
		i(c_1^{2(m-1)+1}) & = 2(m-1)(q_0+q_+) + 2 \sum_{j=1}^r E_m\left(\frac{(2(m-1)+1)\theta_j}{2\pi}\right) \\
		& + 2 \sum_{j=1}^{r_*} \varphi_m\left(\frac{(2(m-1)+1)\alpha_j}{2\pi}\right) - 2 r^* \equiv 0 \;\; \big({\rm mod \;} 2\big).
	\end{split}
	\]
	It implies that $i(c_1^{2(m-1)+1})$ is always even and $\nu(c_1^{2(m-1)+1}) = \nu(c_1) \;({\rm mod} \; 2)$ due to \eqref{E:nul-2} for every $m \in \mathbb{N}$. Recall Proposition \ref{P:dim-1} which indicates $k_q(c) =0$ for $q <i(c), \; q >i(c) + \nu(c)$ and $k_{i(c)} + k_{i(c) + \nu(c)} \leq 1$. We have 
	\[
	\sum_{\text{$q$ is even}} k_q(c_1^{2(m-1)+1}) \leq 1.
	\]
	for each $m \geq 1$ since  $\nu(c_1^{2(m-1)+1}) \leq 2$. The following inequality holds
	\[
	\sum_{0 \leq q \leq N, \; \text{$q$ is even}} M_q = \sum_{0 \leq q \leq N, m \geq 1,\; \text{$q$ is even}} k_q(c_1^{2(m-1)+1}) \leq \sharp \left\{ m \;\big| i(c_1^{2(m-1)+1}) \leq N \right\}. 
	\]
On the other hand, due to the mean index inequality, we have 
	\[
	\begin{split}
	i(c_1^{2(m-1)+1}) &\geq (2(m-1)+1)\hat{i}(c_1) -1 \geq (2(m-1)+1) \left(\sqrt{\delta}\frac{\lambda+1}{2\lambda} -\frac{1}{2}\right)  + \frac{2(m-1)+1}{2} - 1 \\
	&> m - \frac{3}{2} + \frac{2(m-1)+1}{2N}.
	\end{split}
	\]
Suppose that $m - \frac{3}{2} + \frac{2(m-1)+1}{2N} >N$. It follows $m \geq N+1$ then implies 
	\[
	\sum_{0 \leq q \leq N, \; \text{$q$ is even}} M_q\leq \sharp \left\{ m \;\big| i(c_1^{2(m-1)+1}) \leq N \right\} \leq N.
	\]
It contradicts with \eqref{E:Morse-inq}. Therefore there must be another prime non-contractible closed geodesics $c_2$ of class $[h]$ satisfying $i(c_2) \leq N$. Due to Lemma \ref{L:index-c} we have 
	\[
	L(c_2) \leq \frac{N+1}{\sqrt{\delta}} \pi \leq  \frac{\pi}{\sqrt{\delta}} \left(2 + \frac{1}{\sqrt{\delta}\frac{\lambda+1}{\lambda}-1}\right).
	\]

(2) Suppose $N$ is an odd integer. We claim that there must be another non-contractible closed geodesic with $i(c_2) \leq N-1$. Otherwise, assuming there is only one non-contractible minimal closed geodesic, any other closed geodesic $\wt c$ with $i(\wt c) \leq N-1$ must be the iteration of $c_1$. Then the Morse inequality \eqref{E:Morse-inq} follows 
\[
\sum_{0 \leq q \leq N-1,\; \text{$q$ is even}} M_q \geq \sum_{0 \leq q \leq N-1,\; \text{$q$ is even}} b_q = 1 + 2 \times \frac{N-1}{2} =N.
\] 
Repeating the proof for even case by insteading $N$ by $N-1$, we obtain that there must be another prime closed geodesic $c_2$ with $i(c_2) \leq N-1$. Then due to Lemma \ref{L:index-c}, one has 
\[
L(c_2) \leq \frac{N}{\sqrt{\delta}} \pi \leq \frac{1}{\sqrt{\delta}} \left(1+ \frac{1}{\sqrt{\delta}\frac{\lambda+1}{\lambda}-1} \right) \pi.
\]
\end{proof}

\section{Equivariant Morse theory for non-contractible closed geodesics} \label{S:Equivariant}
 
However, if the metrics are assumed to be not far from standard Riemannian metrics, we could obtain more simple closed geodesics. 

\begin{lemma} \label{L:homology}
There are $n$ sub-ordinate non-zero homology classes $\sigma_k \in H^{S^1}_{2k-2}(\Lambda_h M)$ for $1 \leq k \leq n$.
\end{lemma}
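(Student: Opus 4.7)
The plan is to read the classes directly off the rational equivariant cohomology ring exhibited in Lemma \ref{L:Betti-Tai} and then transport them to homology via the Kronecker pairing. First I would fix cohomology generators $\omega_k \in H^{2k-2}_{S^1}(\Lambda_h M; \mathbb{Q})$ for each $k = 1, \ldots, n$. In the even case $n = 2k_0$, the relation $w^{2k_0} = 0$ is harmless, so $\omega_k := w^{k-1}$ is non-zero for $1 \leq k \leq n$. In the odd case $n = 2k_0 + 1$, I set $\omega_k := w^{k-1}$ for $1 \leq k \leq k_0$, pick $\omega_{k_0+1} := z$, and set $\omega_k := w^{k - k_0 - 1} z$ for $k_0 + 1 < k \leq n$; in this range the $w$-power exponent $k - k_0 - 1$ never reaches $k_0 + 1$, so $\omega_k \neq 0$ by the defining relation. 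All $\omega_k$ lie in degree $2(k-1)$.

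Next, since the coefficients are rational and each graded piece is finite-dimensional, I would dualize via the Kronecker pairing $H^*_{S^1} \otimes H^{S^1}_* \to \mathbb{Q}$. Because $b_{2k-2} \geq 1$ by Lemma \ref{L:Betti-Tai}, for each $k$ I obtain a non-zero homology class $\sigma_k \in H^{S^1}_{2k-2}(\Lambda_h M; \mathbb{Q})$ with $\langle \omega_k, \sigma_k \rangle \neq 0$. To verify the sub-ordinate relation $\sigma_k = w \frown \sigma_{k+1}$ (up to a nonzero scalar that can be absorbed by rescaling $\sigma_k$), I would invoke the cap-cup adjunction $\langle \alpha, w \frown \sigma_{k+1} \rangle = \langle w \smile \alpha, \sigma_{k+1} \rangle$ and check that $w \smile \omega_k$ is a non-zero scalar multiple of $\omega_{k+1}$ for every $k = 1, \ldots, n-1$. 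In the even case this is immediate from $w \smile w^{k-1} = w^k$. In the odd case it is similarly immediate except at the transition from the $w$-tower to the $z$-tower: there $w \smile w^{k_0} = 0$, but the deliberate choice $\omega_{k_0+1} = z$ made in the first step ensures $w \smile z = wz$, which is precisely $\omega_{k_0+2}$, so the inductive chain continues all the way through degree $2(n-1)$.

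The main obstacle, and essentially the only non-formal point in the argument, is this transition in the odd-dimensional case. Lemma \ref{L:Betti-Tai} provides the decisive input $b_{2k_0} = 2$, giving enough room in degree $2k_0$ to select $z$ in place of $w^{k_0}$; without that second generator the sub-ordinate chain would terminate at $k = k_0 + 1$. Once this bookkeeping is handled, the entire argument is internal to the ring structure presented in Lemma \ref{L:Betti-Tai} and requires no further topological input.
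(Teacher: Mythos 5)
Your even-dimensional case is essentially fine: there $\omega_k=w^{k-1}$ is non-zero for all $1\le k\le n$ because the relation is $w^{2k_0}=0$ with $n=2k_0$, the chain $w\smile\omega_k=\omega_{k+1}$ closes up, and the classes $\sigma_k$ are subordinate (though you should \emph{define} $\sigma_k:=w\frown\sigma_{k+1}$ inductively rather than choose each $\sigma_k$ independently by duality, since $b_{2k-2}$ can exceed $1$ and an independently chosen dual class need not be proportional to $w\frown\sigma_{k+1}$). The odd case, however, contains a genuine gap, and you have located the problematic transition in the wrong place. You check $w\smile\omega_{k_0+1}=w\smile z=wz=\omega_{k_0+2}$, but the step that actually fails is $k=k_0$: there $w\smile\omega_{k_0}=w\smile w^{k_0-1}=w^{k_0}$, which is \emph{not} a scalar multiple of $\omega_{k_0+1}=z$; the two are linearly independent in degree $2k_0$, which is exactly why $b_{2k_0}=2$. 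Concretely, if $\sigma_{k_0+1}$ is dual to $z$ and annihilates $w^{k_0}$, then for $n\ge5$ the group $H^{2k_0-2}_{S^1}$ is spanned by $w^{k_0-1}$ alone, so $\langle w^{k_0-1},w\frown\sigma_{k_0+1}\rangle=\langle w^{k_0},\sigma_{k_0+1}\rangle=0$ and hence $w\frown\sigma_{k_0+1}=0$: the chain dies. Worse, the defect cannot be repaired inside the quoted presentation: for odd $n\ge5$ every degree-$2$ class is a multiple of $w$, and $w^{n-1}=w^{2k_0}=0$ because $2k_0\ge k_0+1$, so \emph{no} chain $\sigma_1=\sigma_n\frown(u_{n-1}\smile\cdots\smile u_1)$ with each $u_j$ of degree $2$ can end in a non-zero $\sigma_1$. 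A subordinate chain of length $n$ simply cannot be read off the ring of Lemma \ref{L:Betti-Tai}.

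This is precisely why the paper argues geometrically rather than algebraically. It works with the standard metric $g_0$, identifies the bottom critical manifold of $E_0$ in $\Lambda_h M$ with the unit tangent bundle $STM$, and uses the Euler class $e$ of the free circle bundle $STM\to STM/S^1$, for which $e^{n-1}\neq0$ in $H^{2n-2}(STM/S^1)$. Capping the fundamental class $[C]$ of the closed manifold $STM/S^1$ with $e^{n-k}$ gives non-zero classes $\alpha_k$ by Poincar\'e duality; these are transported into $H^{S^1}_{2k-2}(\Lambda_h M)$ via the Thom isomorphism and the Morse-theoretic decomposition of $H^{S^1}_*(\Lambda_h M)$, and the subordination is realized by the pullback $f^*(\eta)$ of the universal first Chern class, whose restriction to the bottom stratum is $e$. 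In other words, the non-vanishing of the relevant cap products is established by a \emph{local} computation at the lowest critical level, not from the global ring presentation. To salvage your approach you would at least have to identify which degree-$2$ equivariant class restricts to the Euler class on the bottom stratum and prove that its powers survive up to degree $2n-2$; Lemma \ref{L:Betti-Tai} by itself does not supply that input.
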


\begin{proof}
Let $g_0$ be the standard metric on $M$. The corresponding energy functional is given by
\[
E_0(c) = \frac{1}{2} \int_{S^1} g_0(\dot{c}(t),\dot{c}(t)) dt.
\]
All prime non-contractible closed geodesics on $(S^n/\Gamma,g_0)$ of class $[h]$ are of length $\frac{2\pi}{p}$ and they are completely degenerate, i.e., all the eignvalues of the Poincar\'{e} map is $1$. If $c$ is a closed geodesic of class $[h]$ then $(m-1)p+1$-th iteration  $c^{(m-1)p+1}$ is for $m \geq 2$.   

The critical manifolds consist of $c^{(m-1)p+1}$ is diffeomorphic to the unit tangential  bundle $STM:=\{(x,v) \in TM \big| F(x,v)=1 \}$. The index is 
\[
\left(p(m-1)+1 -1 \right)(n-1) = p(m-1)(n-1)
\]
The $S^1$-equivariant homology group of $\Lambda_h M$ is generated by the local critical groups
\be
\begin{split}
 H^{S^1}_*(\Lambda_h M) & \simeq \bigoplus_{m \geq 1}	H^{S^1}_{*} (\Lambda_{0,h}^{\frac{2\pi^2(p(m-1)+1)^2}{p^2} + \ep} M, \; \Lambda_{0,h}^{\frac{2 \pi^2 (p(m-1)+1)^2}{p^2} - \ep} M) \\
	& \simeq  \bigoplus_{m \geq 1} H^{S^1}_{*-p(m-1)(n-1)} (STM),  
\end{split}
\ee 
where $\Lambda_{0,h}^\kappa= \{ d \in \Lambda_h M | E_0(d) \leq \kappa \}$ and $\ep>0$ is a sufficiently small number. The second isomorphism comes from the fact 
\[
H^{S^1}_*(M^b,M^a) \simeq \bigoplus H^{S^1}_{*-\lambda_i}(A_i),
\]
where $c\in (a,b)$ is a critical value of Morse function $f:M\rightarrow \R$, $K \cap f^{-1}(c) = \bigcup_{i \in I} A_i$ is the critical manifolds and $\lambda_i$ is the index of $A_i$. The isotropy group of the $S^1$-action on $STM$ is $Z_m$ for $m \in \mathbf{N}$. Due to \cite{FR78} and the universal coefficient theorem, one has
\be
H_{*}^{S^1} (STM) \simeq H_*(STM / S^1).
\ee
Now let $m=1$. Then the $S^1$-action on $STM$ is a free group action hence $STM/S^1$ is a smooth manifold. We have the $S^1$-fibration $S^1 \rightarrow STM \rightarrow STM/S^1$. Let $e \in H^2(STM/S^1)$ be the Euler class of the $S^1$-fibration. Hence $e^{n-1} \neq 0$ is a generator of $H^{2n-2}(STM/S^1)$. 

Denote by $[C]$ the fundamental homology class of $STM/S^1$. We have $n$ sub-ordinate nonzero homology class $\alpha_k \in H^{S^1}_{2k-2} (STM/S^1)$ for $1 \leq k \leq n$ denoted by $\alpha_k =[C] \cap e^{n-k}$ in which $\cap$ is the cap product. Note that $STM/S^1$ is a non-degenerate $S^1$-manifold. It follows from the handle-bundle theorem that $\Lambda_{0,h}^{\frac{2\pi^2(p(m-1)+1)^2}{p^2}+\ep}$ is $S^1$-homotopic to $\Lambda_{0,h}^{\frac{2\pi^2(p(m-1)+1)^2}{p^2}-\ep}$ with the handle-bundle $DN^{-}$ attached along $SN^{-}=\p DN^{-}$ where $DN^{-}$ is the closed disk bundle of the negative bundle over $STM/S^1$. In particular, ${\rm rank} DN^{-} =0$. Now we have 
\be
H^{S^1}_{*}(\Lambda_{0,h}^{\frac{2 \pi^2}{p^2}+\ep}, \Lambda_{0,h}^{\frac{2 \pi^2}{p^2}-\ep}) \simeq H^{S^1}_{*}(DN^{-},SN^{-}) \simeq H^{S^1}_{*}(STM), 
\ee
where the last isomorphism is given by the Thom isomorphism $\Phi$. Let $f:(\Lambda_h M)_{S^1} = \Lambda_h M \times_{S^1} S^\infty \rightarrow \mathbf{C} P^\infty$ be a classifying map and $\eta \in H^2 (\mathbf{C} P^\infty)$ be the universal first rational Chern class. Let $\sigma_k = \Phi^{-1}(\alpha_k) = \Phi^{-1}([C]\cap e^{n-k}) \neq 0$. Since the Euler class $e \in H^2(STM)$ coincide with the first Chern class $c_1 \in H^2(STM)$ of the $S^1$-bundle $STM \rightarrow STM/S^1$, $DN^{-}$ is $S^1$-homotopic to $STM$ and the $S^1$-acion is free on $STM$ thus $(STM)_{S^1}$ is homotopic to $STM/S^1$, we have $\sigma_k = \Phi^{-1}([C]) \cap f^*(\eta)^{n-k}\in H_{2k-2}^{S^1}(\Lambda_h M)$.   

\end{proof}

Suppose that $\sigma_k \in H^{S^1}_{2k-2}(\Lambda_h M)$ for $1 \leq k \leq n$. Let 
\be \label{E:minimax}
\lambda_k := \inf_{\gamma \in \sigma_k} \sup_{d \in {\rm im} \gamma} E(d).
\ee 
Then we have the following proposition.
\begin{proposition} \label{P:critical-value}
Each $\lambda_k$ is a critical value of $E$ and $0< \lambda_1 \leq  \lambda_2 \leq \cdots \leq \lambda_{n}$. In particula, if $\lambda_k=\lambda_{k+1}$ for some $1\leq k \leq n-1$, then there are infinitely many prime closed geodesics on $(M,F)$.  
\end{proposition}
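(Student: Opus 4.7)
The plan is to carry out a classical $S^1$-equivariant Lusternik--Schnirelmann minimax argument on the non-contractible loop space, using the subordination relation among the classes $\sigma_k$ built in Lemma \ref{L:homology}.

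First, I would establish monotonicity and positivity. The construction $\sigma_k=[C]\cap e^{n-k}$ together with the Thom/Chern identification $e\leftrightarrow f^*(\eta)$ yields a subordination relation $\sigma_k = \sigma_{k+1}\cap f^*(\eta)$ in $H^{S^1}_*(\Lambda_h M)$. If $\gamma_{k+1}$ is a singular cycle representing $\sigma_{k+1}$, then capping with a cocycle representing $f^*(\eta)$ produces a cycle $\gamma_k$ representing $\sigma_k$ whose support is contained in $\mathrm{im}\,\gamma_{k+1}$; taking $\sup E$ on supports and then infima over representatives gives $\lambda_k \leq \lambda_{k+1}$. For positivity, since $h$ is a nontrivial element of $\pi_1(M)$, the component $\Lambda_h M$ contains no constant loops; the Palais--Smale condition and Lemma \ref{L:Length} then force $\inf_{\Lambda_h M} E > 0$, and in particular $\lambda_1>0$.

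Second, I would show each $\lambda_k$ is a critical value of $E$ by the standard deformation argument. If $\lambda_k$ were a regular value, the Palais--Smale condition for $E$ on $\Lambda_h M$ together with the $S^1$-equivariant deformation lemma gives $\varepsilon>0$ and an $S^1$-equivariant continuous map $\phi:\Lambda_h M\to\Lambda_h M$ with $\phi(\Lambda_h M^{\lambda_k+\varepsilon})\subset \Lambda_h M^{\lambda_k-\varepsilon}$. Applied to a near-optimal representative of $\sigma_k$, the induced map $\phi_*$ preserves the homology class but lowers its $\sup E$ below $\lambda_k$, contradicting the definition \eqref{E:minimax}.

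Third, I would deduce the multiplicity statement by the LS cohomology-length principle. Suppose $\lambda_k=\lambda_{k+1}=\lambda$ and, for contradiction, that there are only finitely many prime closed geodesics of class $[h]$. Because iterates of a given prime $c$ have energies $((m-1)p+1)^2 E(c)$ and hence distinct values, the critical set $K_\lambda\subset\Lambda_h M$ at level $\lambda$ is a finite disjoint union of $S^1$-orbits, so the equivariant cohomology of an $S^1$-tubular neighborhood of $K_\lambda$ is of finite type with $f^*(\eta)$ acting nilpotently on it. One can then construct an $S^1$-equivariant deformation of sublevel sets near $\lambda$ that pushes any cycle representing $\sigma_{k+1}$ below $\lambda+\varepsilon$ while simultaneously pushing its cap with $f^*(\eta)$, which represents $\sigma_k$, strictly below $\lambda-\varepsilon$; this forces $\lambda_k<\lambda_{k+1}$, a contradiction. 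Therefore the set of prime closed geodesics of class $[h]$ must be infinite.

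The main obstacle is the third step: controlling the local equivariant cohomology around $K_\lambda$ so that the cap product with $f^*(\eta)$ really does separate the two minimax levels. The subtlety comes from the non-free $S^1$-action on iterates (isotropy $\mathbb{Z}_m$ along $c^{(m-1)p+1}$), which requires the Borel-construction argument to be carried out with some care, rather than a direct appeal to the free case used in Lemma \ref{L:homology}. Everything else (monotonicity, positivity, criticality) reduces to routine equivariant deformation arguments once the subordination $\sigma_k=\sigma_{k+1}\cap f^*(\eta)$ is in hand.
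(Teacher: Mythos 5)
Your proposal is correct and follows essentially the same route as the paper: a standard $S^1$-equivariant deformation argument for criticality, the cap-product subordination $\sigma_k=\sigma_{k+1}\cap f^*(\eta)$ for the ordering $\lambda_k\leq\lambda_{k+1}$, and the classical Lusternik--Schnirelmann separation argument (which the paper simply outsources to Proposition 2.2 of \cite{Wan12}) for the case $\lambda_k=\lambda_{k+1}$. Your third step, including the observation that $f^*(\eta)$ restricts trivially (over $\mathbb{Q}$) to a neighborhood of finitely many critical orbits with finite isotropy, supplies exactly the detail the paper leaves to the citation.
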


\begin{proof}
Since $E$ is $C^{1,1}$ on $\Lambda_h M$. $\lambda_1$ is the minimal point of $E$ in $\Lambda_h M$ hence it is a critical value. Assume that $\lambda_k$ is not a critical value of $E$ for some $2 \leq k \leq n$. Due to the P-S condition, it is standard to construct an $S^1$-equivariant negative gradient flow such that we can push a chain $\gamma \in \sigma_k$ with $\sup_{d \in Im \gamma} E(d) < \lambda_k + \rho$ down the level $\lambda_k - \rho$. It contradicts with the definition of $\lambda_k$.  Therefore, $0<\lambda_1\leq \lambda_2 \leq \cdots \leq \lambda_{n}$ are critical values of $E$ where the order is due to the definition of cap product. 

If $\lambda_k= \lambda_{k+1}$ for some $1 \leq k \leq n-1$, a similar argument as Proposition 2.2 of \cite{Wan12} implies that there would be infinitely many non-contractible closed geodesics of class $[h]$.  The proof is complete.

\end{proof}

\begin{lemma} \label{L:Critical-values}
	Suppose that there exists $\delta_k>0$ such that any closed geodesic $c$ with $E(c) \in (\lambda_k-\delta_k,\lambda_k+\delta_k)$ is isolated. Then there exists a clsoed geodesic $c_k$ such that 
	\be
	E(c_k) = \lambda_k,\;\; i(c_k) \leq 2k-2 \leq i(c_k) + \nu(c_k)
\ee for $1 \leq k \leq n$.
\end{lemma}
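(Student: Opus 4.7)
The plan is a standard minimax/deformation argument in equivariant Morse theory: assuming the conclusion fails we obtain vanishing of every relevant local critical module at level $\lambda_k$, and then use the long exact sequence of a sublevel pair together with $S^1$-equivariant deformation to push a minimax cycle below $\lambda_k$, contradicting \eqref{E:minimax}.

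Write $\Lambda_h^{\leq c}:=\{d\in\Lambda_hM\,:\,E(d)\leq c\}$. By the Palais-Smale condition for $E$ on $\Lambda_h M$ together with the isolation hypothesis, I may fix $\epsilon\in(0,\delta_k)$ small enough that $\lambda_k$ is the unique critical value in $[\lambda_k-\epsilon,\lambda_k+\epsilon]$ and the critical set at level $\lambda_k$ consists of finitely many $S^1$-orbits $S^1\cdot c_k^{(1)},\ldots,S^1\cdot c_k^{(m)}$. Argue by contradiction: suppose every such $c_k^{(j)}$ fails $2k-2\in[i(c_k^{(j)}),\,i(c_k^{(j)})+\nu(c_k^{(j)})]$. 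By Proposition \ref{P:dim-1} this forces $\overline{C}_{2k-2}(E,c_k^{(j)};[h])=0$ for every $j$.

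Next, apply the $S^1$-equivariant handle-bundle decomposition around an isolated finite set of critical orbits (Chapter I of \cite{Chan93}; for the $C^{1,1}$ setting on $\Lambda M$ via finite-dimensional approximations see Rademacher \cite{Rad92}) to obtain
\[
H^{S^1}_{2k-2}\bigl(\Lambda_h^{\leq\lambda_k+\epsilon},\,\Lambda_h^{\leq\lambda_k-\epsilon};\,\mathbb{Q}\bigr)\;\cong\;\bigoplus_{j=1}^{m}\overline{C}_{2k-2}(E,c_k^{(j)};[h])\;=\;0.
\]
Feeding this into the long exact sequence of the pair shows that the inclusion-induced map
\[
\iota_*\colon H^{S^1}_{2k-2}(\Lambda_h^{\leq\lambda_k-\epsilon})\;\longrightarrow\;H^{S^1}_{2k-2}(\Lambda_h^{\leq\lambda_k+\epsilon})
\]
is surjective. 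By the minimax characterisation \eqref{E:minimax} there exists a singular cycle $\gamma$ representing $\sigma_k$ with $\mathrm{im}\,\gamma\subset\Lambda_h^{\leq\lambda_k+\epsilon}$. Surjectivity of $\iota_*$ supplies a cycle $\gamma'$ with image in $\Lambda_h^{\leq\lambda_k-\epsilon}$ whose class maps to $[\gamma]$; under the further inclusion into $\Lambda_h M$ this class equals $\sigma_k$. Thus $\gamma'$ represents $\sigma_k$ and $\sup_{d\in\mathrm{im}\,\gamma'}E(d)\leq\lambda_k-\epsilon$, contradicting \eqref{E:minimax}. Hence some $c_k:=c_k^{(j_0)}$ at level $\lambda_k$ satisfies $i(c_k)\leq 2k-2\leq i(c_k)+\nu(c_k)$.

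The main technical obstacle is the $S^1$-equivariant handle-bundle decomposition in the infinite-dimensional $C^{1,1}$ setting; this reduces via Rademacher's finite-dimensional approximations to the local theory of Gromoll-Meyer, and using rational coefficients together with the Fadell-Rabinowitz isomorphism $H^{S^1}_*(\cdot;\mathbb{Q})\cong H_*(\cdot/S^1;\mathbb{Q})$ neatly handles the finite cyclic isotropy that arises on iterated orbits, so that the local critical modules $\overline{C}_*$ assemble into the relative homology as in the non-equivariant case.
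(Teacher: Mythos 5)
Your proposal is correct and follows essentially the same route as the paper: both arguments combine the long exact sequence of the sublevel pair $(\Lambda_h^{\lambda_k+\ep},\Lambda_h^{\lambda_k-\ep})$, the decomposition of its relative $S^1$-equivariant homology into local critical modules (handled via Fadell--Rabinowitz and Gromoll--Meyer through finite-dimensional approximations), and the minimax characterisation \eqref{E:minimax} to rule out surjectivity of the inclusion-induced map in degree $2k-2$. The only difference is organisational --- you assume the index conclusion fails and deduce vanishing of the relative homology, whereas the paper assumes the vanishing directly --- which does not change the substance of the argument.
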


\begin{proof}
By the isolatedness of critical values and Palais-Smale condition, there exists a small number $\ep >0$ such that $\lambda_k$ is the unique critical value of $E$ in $(\lambda_k-\ep,\lambda_k+\ep)$. Due to the definition of $\lambda_k$ in \eqref{E:minimax}, we have a chain $\gamma \in \sigma_k$ such that $\sup_{d \in Im \gamma} E(d) < \lambda_k + \ep$. Due to Lemma \ref{L:homology}, we have 
\[
0 \neq [\gamma] \in H^{S^1}_{2k-2}(\Lambda_h^{\lambda_k+\ep}).
\]
Assume that $H^{S^1}_{2k-2}(\Lambda_h^{\lambda_k+\ep}, \; \Lambda_h^{\lambda_k-\ep}) = 0$. Due to the exactness of homology sequence 
\be
H_{2k-2}^{S^1} (\Lambda_h^{\lambda_k-\ep}) \xrightarrow{i_*}  H_{2k-2}^{S^1} (\Lambda_h^{\lambda_k+\ep}) \xrightarrow{j_*} H^{S^1}_{2k-2}(\Lambda_h^{\lambda_k+\ep}, \; \Lambda_h^{\lambda_k-\ep}),
\ee
where $i: \Lambda_h^{\lambda_k-\ep} \rightarrow \Lambda_h^{\lambda_k+\ep}$ and $j:\left(\Lambda_h^{\lambda_k+\ep},\emptyset \right) \rightarrow (\Lambda_h^{\lambda_k+\ep}, \; \Lambda_h^{\lambda_k-\ep})$,
we have  ${\rm im}\; i_* =  H_{2k-2}^{S^1} (\Lambda_h^{\lambda_k+\ep})$ thus we have $\gamma^\prime \in  H_{2k-2}^{S^1} (\Lambda_h^{\lambda_k-\ep})$ such that $\gamma = i_*(\gamma^\prime)$. It  contradicts with the definition of $\lambda_k$. 

Since $\lambda_k$ is the unique critical value of $E$ in $(\lambda_k-\ep,\lambda_k+\ep)$. We have 
\be
H^{S^1}_*(\Lambda_h^{\lambda_k+\ep},\Lambda_h^{\lambda_k-\ep}) \simeq \bigoplus_{c \in Crit(E)} Crit^{S^1}_*(E,S^1\cdot c) = \bigoplus_{c \in Crit(E)} H^{S^1}_*\left(\Lambda(c) \cap N, (\Lambda(c) \setminus S^1\cdot c) \cap N \right),
\ee 
where $N$ is an $S^1$-invariant open neighbourhood of $S^1\cdot c$ such that $Crit(E)\cap N = S^1\cdot c$. Then by Lemma 6.11 of \cite{FR78}, we have 
\[
H^{S^1}_*\left(\Lambda(c) \cap N, (\Lambda(c) \setminus S^1\cdot c) \cap N \right) \simeq H_*\left((\Lambda(c) \cap N)/S^1, ((\Lambda(c) \setminus S^1\cdot c) \cap N)/S^1 \right).
\]
Then consider thre finite-dimensional approximating to $\Lambda_h M$ and apply the Gromoll-Meyer theory, we have 
\[
H_q\left((\Lambda(c) \cap N)/S^1, ((\Lambda(c) \setminus S^1\cdot c) \cap N)/S^1 \right)=0,
\]
provided $q>i(c)+\nu(c)$ or $q<i(c)$. The proof is completed.
\end{proof}

Now we are in the position to prove Theorem \ref{T:main-2} and Theorem \ref{T:main-3}.

\begin{proof}[Proof of Theorem \ref{T:main-2}] 

Suppose $F^2 <(\frac{\lambda+1}{\lambda})^2 g_0$ and $l(M,F)  \geq \frac{\pi}{p}(\frac{\lambda+1}{\lambda})$. For sufficiently small $\ep>0$, one obtains 
$$
\Lambda_{0,h}^{\frac{2 \pi^2}{p^2} + \ep} M \subset \Lambda_h^{ \frac{2 \pi^2}{p^2} (\frac{\lambda+1}{\lambda})^2-\ep} M.
$$ 
Due to Proposition \ref{P:critical-value}, there exists $n$ critical values of $E$
\be \label{E:critical-value}
0 < \lambda_1 \leq \lambda_2 \leq \cdots \leq \lambda_{n} < \frac{2 \pi^2}{p^2} (\frac{\lambda+1}{\lambda})^2.
\ee
Therefore, we have $n$ distinct closed geodesics $\{c_k\}_{k=1}^{n}$ on $(S^n/\Gamma,F)$ which are homotopic to $[h]$ with $E(c_k)= \lambda_k$. We claim each $c_k$ is a minimal closed geodesic without self-intersections. Otherwise, one would obtain 
\[
L(c_k) \geq 2 \frac{\pi}{p} \frac{\lambda+1}{\lambda}
\]
since any closed geodesic loop satisfies $l(M,F) \geq \frac{\pi}{p} \frac{\lambda+1}{\lambda}$. It follows that  
\be
E(c_k) \geq \frac{1}{2} L(c_k)^2 \geq 2 \frac{\pi^2}{p^2} (\frac{\lambda+1}{\lambda})^2,
\ee 
which is contradicted with condition \eqref{E:critical-value}. 

When $\lambda_1<\lambda_2<\cdots<\lambda_{n}$, it is clearly $n$ non-contractible closed geodesics. On the other hand, if there exists $1 \leq k \leq n-1$ such that $\lambda_k = \lambda_{k+1}$, there are infinitely many non-contractible closed geodesics of class $[h]$ below the energy $\lambda_n+\ep$ which are not self-intersected due to \eqref{E:critical-value}. The proof is now complete.
\end{proof}
	
In the next we consider the stability of these non-contractible closed geodesics.

\begin{proof}[Proof of Theorem \ref{T:main-3}]
	
It is sufficient for us to focus on the case  $0<\lambda_1 <\lambda_2<\cdots <\lambda_{n}$ are isolated cirtical values since another cases are similar with \cite{Wan12}.


Applying Lemma \ref{L:Critical-values} we can find a closed geodesic $c_k$ such that $E(c_k)=\lambda_k$ and $i(c_k) \leq 2 k-2 \leq i(c_k) + \nu(c_k)$. Assume that $c_k$ is hyperbolic, then we have $i(c_k) =2k-2$. Recall 
\[
L(c_k) = \frac{1}{p} L(c_k^p) \geq \frac{\lambda +1}{\lambda} \frac{\pi}{p}.
\]
Let us choose $m,l \in \BFN$ such that 
\[
\frac{\lambda+1}{\lambda} \sqrt{\delta} - \ep_0 <\frac{pl}{p(m-1)+1} < \frac{\lambda+1}{\lambda} \sqrt{\delta}
\]
for some small $\ep_0>0$. We have 
\[
L(c_k^{p(m-1)+1}) \geq (p(m-1)+1) \frac{\pi}{p} \frac{\lambda+1}{\lambda} > l \frac{\pi}{\sqrt{\delta}}.
\] 
Lemma \ref{L:index-c} implies
\be
i(c_k^{p(m-1)+1}) \geq l (n-1).
\ee
On the other hand, if closed geodesics are hyperbolic, we must have $i(c_k^{p(m-1)+1}) = 2(k-1)(p(m-1)+1)$ and obtain
\[
\frac{pl}{p(m-1)+1} \leq \frac{2(k-1)p}{n-1}.
\]  
Therefore $c_k$ is a non-hyperbolic closed geodesic for $k \geq 1$ satisfies 
\[
\frac{2(k-1)p}{n-1} < \frac{\lambda+1}{\lambda} \sqrt{\delta}. 
\]
We have at least $\left[ \frac{\lambda+1}{\lambda} \sqrt{\delta} \frac{n-1}{2p}+1\right]$ non-hyperbolic non-contractible closed geodesics of class $[h]$.

On the other hand, suppose $F^2 <\frac{1}{\rho}g_0$ with $\rho >\left(\frac{\lambda}{\lambda+1}\right)^2$. We have $L(c_k) <\frac{2 \pi}{p} \frac{1}{\sqrt{\rho}}$. Now let 
$m,l \in \BFN$ such that 
\[
L(c_k)< \frac{l \pi}{p(m-1)+1}< \frac{2 \pi}{p} \frac{1}{\sqrt{\rho}}.
\]
Then we have $L(c_k^{p(m-1)+1}) < l \pi$ hence $i(c_k^{p(m-1)+1}) \leq l(n-1)$. Assume $c_k$ is hyperbolic, we have $i(c_k^{p(m-1)+1}) = 2(k-1)\left(p(m-1)+1\right)$ and the following inequality holds
\[
\frac{2k-2}{n-1} \leq \frac{l}{p(m-1)+1}
\] 
Therefore $c_k$ must be non-hyperbolic closed geodesics when
\[
k>\frac{n-1}{p} \frac{1}{\sqrt{\rho}}+1,
\]
In conclusion, we have 
\[
n - \left[ \frac{n-1}{p} \frac{1}{\sqrt{\rho}}+1 \right] + \left[\frac{n-1}{p} \frac{\lambda+1}{2\lambda} \sqrt{\delta}+1\right]
\]
non-hyperbolic non-contractible closed geodesics.

\end{proof}

{\bf Acknowledgement:} The author would like to thank Prof. Hui Liu for suggesting the issues on the non-contractible closed geodesics.


\begin{thebibliography}{99}
	\bibitem{Ano} D. V. Anosov, Geodesics in Finsler geometry. {\it Proc.
		I.C.M.} (Vancouver, B.C. 1974), Vol. 2. 293-297 Montreal (1975)
	(Russian), {\it Amer. Math. Soc. Transl.} 109 (1977) 81-85.
	\bibitem{Ano1980} D. V. Anosov, Some homotopies in spaces of closed curves. {\it Izv. Akad. Nauk SSSR Ser. Mat. } 44 (1980) 1219-1254 (Russian)
	\bibitem{Ban93} V. Bangert,  On the existence of closed geodesics on two-spheres.
	{\it Internat. J. Math.} 4 (1993), no. 1, 1--10.
	\bibitem{BH84} V. Bangert, N. Hingston, Closed geodesics on manifolds with infinite abelian fundamental group. {\it J.
		Differ. Geom.} 19 (1984), 277-282.
	\bibitem{BK1983} V. Bangert, W. Klingenberg, Homology generated by iterated closed geodesics. {\it Topology.}
	22 (1983), 379-388.
	\bibitem{BL10} V. Bangert, Y. Long, The existence of two closed geodesics on every Finsler 2-sphere,
	{\it Math. Ann.} 346 (2010), 335-366.
	\bibitem{BTZ81} W. Ballmann, G. Thorbergsson and W. Ziller, Closed geodesics and the fundamental group. {\it Duke Math. J.}
	48 (1981), 585-588.
	\bibitem{BTZ82} W. Ballmann, G. Thorbergsson and W. Ziller, Existence of closed geodesics on positively curved manifolds.
	{\it J. Differ. Geom.} 18 (1983), no. 2, 221-252
	\bibitem{Bir27} G. Birkhoff, Dynamical Systems. Colloquium publications of American Mathematical Society. (1927)
	\bibitem{Chan93} K. C. Chang, Infinite Dimensional Morse Theory and Multiple Solution Problems. Birkh\"auser. Boston. 1993.
	
	
	\bibitem{DuL1} H. Duan, Y. Long, Multiple closed geodesics on
	bumpy Finsler $n$-spheres, {\it J. Differ. Equa.} 233 (2007), no. 1, 221-240.
	\bibitem{DuL2} H. Duan, Y. Long, The index growth and multiplicity of closed geodesics. {\it J. Funct.
		Anal.} 259 (2010) 1850-1913.
	\bibitem{DLW1} H. Duan, Y. Long and W. Wang, Two closed geodesics on compact simply-connected bumpy
	Finsler manifolds. {\it J. Differ. Geom.} 104 (2016), no. 2, 275-289.
	\bibitem{DLW2} H. Duan, Y. Long and W. Wang, The enhanced common index jump theorem for symplectic paths and
	non-hyperbolic closed geodesics on Finsler manifolds. {\it Calc. Var. and PDEs.} 55 (2016), no. 6, 55:145.
	\bibitem{DLX15} H. Duan, Y. Long and Y. Xiao, Two closed geodesics on $\mathbb{R}P^n$ with a bumpy Finsler metric,
	{\it Calc. Var. and PDEs}, (2015), vol 54,  2883-2894.
	\bibitem{FR78} E. Fadell, P. Rabinowitz, Generalized cohomological index
	theories for Lie group actions with an application to bifurcation questions for
	Hamiltonian systems. {\it Invent. Math.} 45 (1978), no. 2, 139--174.
	
	\bibitem{Fra92} J. Franks, Geodesics on $S\sp 2$ and periodic points of annulus homeomorphisms. {\it Invent. Math.} 108 (1992), no. 2, 403-418.
	\bibitem{GM1969Top} D. Gromoll and W. Meyer, On differentiable functions with isolated critical points. {\it Topology}
	8 (1969), 361-369.
	\bibitem{GM69} D. Gromoll, W. Meyer, Periodic geodesics on compact Riemannian manifolds, {\it J. Differ. Geom.}
	3 (1969), 493-510.
	\bibitem{Hin84} N. Hingston, Equivariant Morse theory and closed geodesics, {\it J. Differ. Geom.} 19 (1984), 85-116.
	\bibitem{Hin93} N. Hingston,  On the growth of the number of closed geodesics on the two-sphere. {\it Inter. Math. Research Notices.} 9 (1993), 253-262.
	\bibitem{HiR} N. Hingston, H.-B. Rademacher, Resonance for loop homology of spheres. {\it J. Differ. Geom.}
	93 (2013), 133-174.
	
	
	\bibitem{Kat73}  A. B. Katok, Ergodic properties of degenerate integrable Hamiltonian systems. {\it Izv. Akad. Nauk.
		SSSR} 37 (1973), [Russian]; {\it Math. USSR-Izv.} 7 (1973), 535-571.
	
	\bibitem{Kli78}  W. Klingenberg, Lectures on closed geodesics. Springer-Verlag, Berlin, heidelberg, New York, 1978.
	\bibitem{Liu2005}  C. Liu, The Relation of the Morse Index of Closed Geodesics with the Maslov-type Index of Symplectic Paths,
	{\it Acta Math. Sinica } 21 (2005), 237-248.
	\bibitem{LL2002} C. Liu and Y. Long,  Iterated index formulae for closed
	geodesics with applications. {\it Science in China.} 45 (2002) 9-28.
	\bibitem{Liu17} H. Liu, The Fadell-Rabinowitz index and multiplicity of non-contractible closed geodesics on Finsler $\mathbb{R}P^{n}$.
	{\it J. Differ. Equa.} 262 (2017), 2540-2553.
	\bibitem{LLX18} H. Liu, Y. Long and Y. Xiao, The existence of two non-contractible closed geodesics on
	every bumpy Finsler compact space form. {\it Discrete Contin. Dyn. Syst.} 38 (2018), 3803-3829.
	\bibitem{LW22} H.Liu and Y. Wang, Multiplicity of non-contractible closed geodesics on Finsler compact space forms. {\it Calc. Var.} 61, 224 (2022)
	\bibitem{LW2023} H. Liu and Y. Wang, Generic existence of infinitely many non-contractible closed geodesics on compact space forms. {\it Acta Mathematica Sinica, English Series.} Accepted, 2023
	\bibitem{LX} H. Liu and Y. Xiao, Resonance identity and multiplicity of non-contractible closed geodesics on Finsler $\mathbb{R}P^{n}$.
	{\it  Adv. Math.} 318 (2017), 158-190.
	
	\bibitem{lo1999} Y. Long,  Bott formula of the Maslov-type index theory.
	{\it Pacific J. Math.} 187 (1999), 113-149.
	\bibitem{lo2000} Y. Long, Precise iteration formulae of the Maslov-type index theory and
	ellipticity of closed characteristics. {\it Adv.  Math.} 154 (2000), 76-131.
	\bibitem{lo2002} Y. Long,  Index Theory for Symplectic Paths with Applications. Progress in Math. 207, Birkh\"auser. 2002.
	\bibitem{lo2006}  Y. Long, Multiplicity and stability of closed geodesics on Finsler 2-spheres, {\it J. Eur. Math. Soc.}
	8 (2006), 341-353.
	\bibitem{LoD} Y. Long, H. Duan,  Multiple closed geodesics on 3-spheres.
	{\it Adv. Math.} 221 (2009) 1757-1803.
	\bibitem{LW07}  Y. Long, W. Wang, Multiple closed geodesics on Riemannian 3-spheres, {\it Calc. Var. and PDEs},
	30 (2007), 183-214.
	\bibitem{LZ02} Y. Long, C. Zhu,  Closed characteristics on
	compact convex hypersurfaces in $\R^{2n}$.  {\it Ann. of Math.}
	155 (2002) 317-368.
	\bibitem{LF51} L. A. Lyusternik and A. I. Fet, Variational problems on
	closed manifolds. {\it Dokl. Akad. Nauk SSSR (N.S.)} 81 (1951) 17-18 (in Russian).
	
	\bibitem{Rad89} H.-B. Rademacher, On the average indices of closed geodesics, {\it J. Diff. Geom.} 29 (1989), 65-83.
	\bibitem{Rad92} H.-B. Rademacher, Morse Theorie und geschlossene Geodatische. {\it Bonner Math. Schr.} 229 (1992).
	\bibitem{Rad04} H.-B. Rademacher, A Sphere Theorem for non-reversible
	Finsler metrics. {\it Math. Ann.} 328 (2004) 373-387.
	\bibitem{Rad07} H.-B. Rademacher, Existence of closed geodesics on
	positively curved Finsler manifolds. {\it Ergod. Th. Dyn. Sys. } 27 (2007), no. 3, 957--969.
	\bibitem{Rad08} H.-B. Rademacher, The second closed geodesic on the complex projective plane. {\it Front.
		Math. China.} 3 (2008), 253-258.
	\bibitem{Rad10} H.-B. Rademacher, The second closed geodesic on Finsler
	spheres of dimension $n>2$. {\it Trans. Amer. Math. Soc. } 362 (2010), no. 3, 1413-1421.
	\bibitem{RT20} H.-B. Rademacher, I. Taimanov, The second closed geodesic, the fundamental group, and generic Finsler metrics. arXiv:2011.01909v2, 2020
	\bibitem{Shen01} Z. Shen, Lectures on Finsler Geometry. World Scientific. Singapore. 2001.
	\bibitem{Tai1} I.A. Taimanov, The type numbers of closed geodesics. {\it Regul. Chaotic Dyn.} 15 (2010), no. 1, 84-100.
	\bibitem{Tai16} I.A. Taimanov, The spaces of non-contractible closed curves in compact space forms, {\it Mat. Sb.} 207(10) (2016), 105-118.
	\bibitem{ViS} M. Vigu$\acute{e}$-Poirrier, D. Sullivan, The homology theory of the closed geodesic problem. {\it J. Differ. Geom.}
	11 (1976), 633-644.
	\bibitem{Wan1} W. Wang, Closed geodesics on positively curved Finsler spheres.
	{\it Adv. Math.} 218 (2008), 1566-1603.
	\bibitem{Wan2} W. Wang,  On a conjecture of Anosov, {\it Adv. Math.} 230 (2012), 1597-1617.
	\bibitem{Wan12} W. Wang, Existence of closed geodesics on Finsler $n$-spheres.
	{\it  Nonlinear Anal.} 75 (2012), 751-757.
	
	\bibitem{Wan13} W. Wang, On the average indices of closed geodesics on positively curved Finsler spheres.
	{\it Math. Ann.} 355 (2013), 1049-1065.
	
	\bibitem{XL15} Y. Xiao and Y. Long, Topological structure of non-contractible loop space
	and closed geodesics on real projective spaces with odd dimensions. {\it Adv. Math.} 279 (2015), 159-200.
	\bibitem{VS1976} M. Vigu\'{e}-Poirrier and D. Sullivan, The homology theory of the closed geodesic problem,
	{\it J. Diff. Geom.} 11 (1976), 663-644.
	\bibitem{Zil83} W. Ziller,  Geometry of the Katok examples, {\it Ergod. Th. Dyn. Sys.} 3 (1983), 135-157.
\end{thebibliography}
\end{document}